\documentclass[10 pt,oneside,reqno,a4paper]{amsart} 
\usepackage{amsfonts,amssymb,amscd,amsmath, enumerate, verbatim, calc} 
\usepackage{float}
\usepackage{amsthm}
\newtheorem{theorem}{Theorem}[section]
\newtheorem{definition}{Definition}[section]
\usepackage[all]{xy}
\usepackage{tikz-cd}
\newtheorem{lemma}[theorem]{Lemma}
\newtheorem{corollary}[theorem]{Corollary}
\newtheorem{proposition}[theorem]{Proposition}
\newtheorem{remark}[theorem]{Remark}
\usepackage{mathtools}

\newtheorem{example}[theorem]{Example}
\numberwithin{equation}{section}

\usepackage{amsfonts}
\newcommand{\field}[1]{\mathbb{#1}}          

\newcommand{\N}{\field{N}}
                   
\newcommand{\C}{\field{C}}

\renewcommand{\ker}{\textnormal{ker}}

\begin{document}
	\title{Representation and Extending structure of a Multiplicative Lie algebra}
    \author{ Neeraj Kumar Maurya$^{1}$ and Sumit Kumar Upadhyay$^{2}$ \vspace{.4cm}\\
				{$^{1,2}$Department of Applied Sciences,\\ Indian Institute of Information Technology Allahabad\\Prayagraj, U. P., India} }
	
		\thanks{$^1$neerajkr2699@gmail.com, $^2$upadhyaysumit365@gmail.com }
		\thanks {2020 Mathematics Subject classification: 17B10, 20C99, 20D40, 20E99, 20N99}
		\keywords{Multiplicative Lie algebra, Representation, Semi-direct product}

\begin{abstract}
In this paper, we attempt to develop a general notion of representation theory for multiplicative Lie algebras. Consequently, we study equivalent, irreducible, and completely reducible representations. We also introduce the notion of the action of a multiplicative Lie algebra on a vector space and show that there is a one-to-one correspondence between the action and the representation. We also studied the extended structure of a multiplicative Lie algebra through a group.
\end{abstract}
	\maketitle

\section{Introduction}
In $1993$, G. J. Ellis \cite{GJ} introduced the concept of multiplicative Lie algebras. Since multiplicative Lie algebras generalize both groups and Lie algebras, many structural properties like nilpotency, solvability, Lie nilpotency, Lie solvability, homology theory, non-abelian tensor product, Schur multiplier, and Bogomolov multiplier, analogous to those of groups and Lie algebras, have been studied in \cite{AGNM, GNM, GNMA, GM, KJ, RS, MRS} by many authors. Representation theory provides a classical method for investigating the structure of abstract algebraic objects in terms of linear groups. A fully developed representation theory of groups and Lie algebras is available (for details, one may see the books \cite{EH, BS}) and has played a central role in the development of modern algebra. In $2025$, A. Kumar et al. \cite{KU} attempted to give a notion of linear representations of a multiplicative Lie algebra, but they noticed that it coincides with group and Lie algebra representations. They have also studied the possible multiplicative Lie algebra structures on the general linear group. Since there is no general concept of representation of a multiplicative Lie algebra available so far, it is natural to seek a general notion of representation of a multiplicative Lie algebra. It is also known that representations of a group and Lie algebras on vector spaces are equivalent to their linear actions on vector spaces, respectively. G. Donadze et al. \cite{GNM, GNMA} introduced the concept of an action of a multiplicative Lie algebra over a multiplicative Lie algebra.  Motivated by the correspondence of action and representation, and the notion of action developed in \cite{GNM, GNMA}, the main aim of this paper is to introduce a foundational framework for the representation of multiplicative Lie algebras and establish general results concerning the structure and properties of such representations.

Besides representation theory, extension and factorization problems provide another important direction in the study of multiplicative Lie algebras. The homology theory for multiplicative Lie algebras has been studied in \cite{AGNM}. 
One of the important problems in multiplicative Lie algebras is the classification of all multiplicative Lie algebras $G$ (up to isomorphism) having $H$ as an ideal such that $G/H$ is isomorphic to $K$, for any two multiplicative Lie algebras $H$ and $K$. Motivated by this problem, in 2019, R. Lal et al. \cite{RS} developed the Schreier extension theory and defined the central extensions for multiplicative Lie algebras. In $2021,$ M. S. Pandey et al. \cite{MS} developed the Schreier extension theory for the center extension and Lie center extension of multiplicative Lie algebras. With the help of work done in  \cite{MS}, D. Pal et al. \cite{DSS} gave a method to determine the multiplicative Lie algebra structures on the semi-direct product of groups under certain conditions.

In $2014$, A. Agore et al. \cite{AG1, AG} developed the extending structures problem for groups and Lie algebras, respectively. This motivates us to introduce the concept of extending structures problem ``\textit{Determine all multiplicative Lie algebras $G$ (up to isomorphism) having $H$ as a normal subalgebra such that $G/H$ is isomorphic to $K$ (as a group)",} for a multiplicative Lie algebra $H$ and a group $K$, which we refer to as the extending structures problem of a multiplicative Lie algebra through a group. The concept of extending structure problems may be helpful for classifying multiplicative Lie algebra structures on a group. Some work has been done in \cite{KU, KP, MKU, PK, MS1, GW} towards classifying multiplicative Lie algebra structures.

The paper is organized as follows. In Section 2, we recall the necessary definitions and preliminary results concerning multiplicative Lie algebras. In Section 3, we introduce the notion of a representation of a multiplicative Lie algebra and establish its fundamental properties. In the subsequent sections, we investigate structural aspects of these representations, including invariant subspaces and decomposability, and develop further consequences of the proposed definition. In Section 4, we discuss the extending structures problem of a multiplicative Lie algebra through a group.

\section{Preliminaries}
In this section, we recall some basic definitions that are useful to this article.   
\begin{definition}\cite{GJ}
	A multiplicative Lie algebra is a triple $ (G,\cdot,\star), $ where $ (G,\cdot) $ is a group together with a binary operation $ \star $ on $G$ satisfying the following identities: 
	\begin{enumerate}
		\item $ x\star x=1 $,  
		\item $ x\star(y\cdot z)=(x\star y)\cdot{^y(x\star z)} $, 
		\item $ (x\cdot y)\star z= {^x(y\star z)}\cdot (x\star z) $, 
		\item $ ((x\star y)\star {^yz})\cdot((y\star z)\star{^zx})\cdot((z\star x)\star{^xy})=1 $, 
		\item $ ^z(x\star y)=(^zx\star {^zy})$, 
	\end{enumerate}
for all $x,y,z\in G$, where $^xy$ denotes $x\cdot y\cdot x^{-1}$. We say $ \star $ is a multiplicative Lie algebra structure on the group $G$.
\end{definition}
\begin{definition}\cite{GJ, RS}
Let $(G,\cdot,\star)$ be a multiplicative Lie algebra. Then
\begin{enumerate}
\item A subgroup $H$ of $G$ is said to be a subalgebra of G if $x\star y \in H$ for all $x,y \in H$.

\item A subalgebra $H$ of $G$ is said to be an ideal of $G$ if it is a normal subgroup of $G$ and $x\star y \in H$ for all $x \in G$ and $y \in H$. For example, the commutator subgroup $[G, G]$ of $G$ is an ideal of $G$, it is denoted by $G'$. 




\item Let $(G',\circ , \star')$ be another multiplicative Lie algebra. A group homomorphism $\psi: G \to G'$ is called a multiplicative Lie algebra homomorphism if $\psi(x\star y) =\psi(x) \star ' \psi(y)$ for all $x, y \in G$. We say that $\psi$ is an isomorphism if it is a bijective homomorphism. 

\end{enumerate}
\end{definition}

 \begin{example}\cite{RS, MS1}
        \begin{enumerate} 
 \item[1.] On any group $G$, we have a multiplicative Lie algebra structure $\star $ given by  $x \star y = 1$ for all $x, y \in G$, called \textbf{trivial} multiplicative Lie algebra structure.
		 
		\item[2.] On any group $G$, we have a multiplicative Lie algebra structure $\star$ given by  $x \star y = [x,y]$ for all $x, y \in G$, called \textbf{improper} multiplicative Lie algebra structure. It is denoted by $G_{[,]}.$
		
		\item[3.] Let $G$ be a cyclic group. Then there is only trivial multiplicative Lie algebra structure on $G$.
         \item[4.]  There are two distinct  multiplicative Lie algebra structure $\star$ on  Klein four group $V_4 =  \langle \ a, b  \ | \ a^2 = b^2 = 1,\ ab  = ba \ \rangle$ given by:\\
		(i) $a \star b = 1,$ \\
		(ii)  $ a \star a = b \star b = 1, \  a \star b = a.$
		\item[5.] Any Lie ring or Lie algebra is a multiplicative Lie algebra.
            \end{enumerate}
    \end{example} 

\begin{definition} \cite{GNM}
    Let $G$ and $H$ be two multiplicative Lie algebras. By an action of $G$ on $H$ we mean an underlying group action of $G$ on $H$ and $H$ on $G$, together with a map \[ \langle-,-\rangle:G\times H\longrightarrow H, \qquad (x,y)\longmapsto \langle x,y\rangle, \] satisfying the following conditions: \begin{align*} 
    &\langle x,yy'\rangle = \langle x,y\rangle \langle{}^{y}x,~ ^{y}y'\rangle, \\
    &\langle xx',y\rangle = \langle{}^{x}x',{}^{x}y\rangle \langle x,y\rangle, \\
    & \langle x\star x',{}^{x'}y\rangle\, \langle{}^{y}x,\langle x',y\rangle\rangle^{-1}\, \langle ^xx',\langle x,y\rangle^{-1}\rangle^{-1} =1, \\
   & \langle{}^{y'}x,y\star y'\rangle\, ({}^{y}y' \star \langle x,y\rangle\,) ({}^{x}y \star \langle x,y'\rangle^{-1}) =1,
    \end{align*}
    where $x,x'\in G$, $y,y'\in H$, and  ${}^{x}x'=xx'x^{-1}, {}^{y}y'=yy'y^{-1},$ for $x\in G$ and $y\in H$, the symbols ${}^{x}y$ and ${}^{y}x$ denote the given group actions of $G$ on $H$ and $H$ on $G$, respectively.
\end{definition}


  \section{Representation theory of a Multiplicative Lie algebra }
  In this section, we attempt to introduce a notion of a representation of a multiplicative Lie algebra. 

\begin{definition}\label{A} Let $G$ be a multiplicative lie algebra, and let $V$ be a vector space over a field $\mathbb{K}$. A representation of $G$ on $V$ is a pair $(\phi,\psi)$, where  
$\phi : G\to GL(V)$ is a group homomorphism, and $\psi : G \to End(V)$ is a map satisfying the following conditions:
\begin{enumerate}
    \item $\psi_{gg'}(v)= \psi_{^g{g'}}(\phi_g(v)) + \psi_g(v)$,
    \item  $\psi_{{^gg'}}( {\phi_g(v)}) =~  \phi_g({\psi_{g'}({v})})$,
    \item $\psi_{g \star g'}(\phi_{g'}(v))=\psi_{g}(\psi_{g'}(v))- \psi_{^g{g'}}(\psi_g(v))$,
\end{enumerate}
for all $g, g'\in G$ and $v\in V.$

Here, note that $\phi$ is a group representation of the underlying group $G$. We call $(\phi,\psi)$ is an $n$-dimensional representation if $dim(V)= n.$
\end{definition}

Let $(\phi,\psi)$ be an $n$-dimensional representation of $G$ on a vector space $V$ over $\mathbb{K}$. Fixing a basis of $V$, we identify $V$ with $\mathbb{K}^n$ and, consequently, $End(V)$ with the matrix algebra $M_n(\mathbb{K})$. Thus, the matrix representation of $G$ is a pair $(\phi',\psi')$, where 
$\phi' : G\to GL_n(\mathbb{K})$ is a group homomorphism, and $\psi' : G \to M_n(\mathbb{K})$ is a map satisfying the following conditions:
\begin{enumerate}
    \item $\psi'(gg')= \psi'(^g{g'})\phi'(g) + \psi'(g)$,
    \item  $\psi'({^gg'}) \phi'(g) =~  \phi'(g)\psi'(g')$,
    \item $\psi'(g \star g')\phi'(g')=\psi'(g)\psi'(g') - \psi'(^g{g'})\psi'(g)$,
\end{enumerate}
for all $g, g'\in G.$

\begin{proposition}\label{1}
    Let $(\phi,\psi)$ be a representation of a multiplicative Lie algebra $G$. Then
    
    \begin{enumerate}
        \item $\psi_1 = 0_V$, here $0_V$ denotes the zero map.
        \item $\psi_{g^{-1}}(v)= - \psi_{g}(\phi_{{g^{-1}}}(v))$ for all $g \in G$ and $v\in V$.
        \item  For any $g \in G$, $\psi_{g^i}=\psi_{g}(\phi_{g^{i-1}} + \phi_{g^{i-2}}+ \cdots + \phi_g+ I_V)$ for all $i \in \N$, where $I_V$ denotes the identity map on $V$.
    \end{enumerate} 
\end{proposition}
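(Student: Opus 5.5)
All three parts follow from condition (1) of Definition \ref{A}, namely $\psi_{gg'}(v)=\psi_{{}^gg'}(\phi_g(v))+\psi_g(v)$, by choosing $g$ and $g'$ appropriately. We also use that $\phi$ is a group homomorphism, so $\phi_1=I_V$ and $\phi_{g^{-1}}=\phi_g^{-1}$. Conditions (2) and (3) are not needed.

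For (1), take $g=g'=1$ in condition (1). Since ${}^11=1$ and $\phi_1=I_V$, this gives $\psi_1(v)=\psi_1(v)+\psi_1(v)$. Subtracting $\psi_1(v)$ in $V$ yields $\psi_1(v)=0$ for all $v$, so $\psi_1=0_V$.

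For (2), take $g'=g^{-1}$ in condition (1). Then ${}^g g^{-1}=gg^{-1}g^{-1}=g^{-1}$, and part (1) gives
\[0=\psi_1(v)=\psi_{g^{-1}}(\phi_g(v))+\psi_g(v).\]
Replace $v$ by $\phi_{g^{-1}}(v)$; this is legitimate because $\phi_g\phi_{g^{-1}}=I_V$. The result is $\psi_{g^{-1}}(v)=-\psi_g(\phi_{g^{-1}}(v))$.

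For (3), induct on $i$. The case $i=1$ is trivial, reading the empty sum as just $I_V$. For the inductive step, write $g^{i+1}=g^i\cdot g$ and apply condition (1) with the pair $(g^i,g)$. Since powers of $g$ commute, ${}^{g^i}g=g$, so
\[\psi_{g^{i+1}}(v)=\psi_g(\phi_{g^i}(v))+\psi_{g^i}(v).\]
Substituting the inductive hypothesis and using linearity of $\psi_g$ gives
\[\psi_{g^{i+1}}=\psi_g\bigl(\phi_{g^i}+\phi_{g^{i-1}}+\cdots+\phi_g+I_V\bigr).\]

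The only point needing care is the order of factors when decomposing $g^{i+1}$. With the ordering $g^i\cdot g$ the conjugate ${}^{g^i}g$ collapses to $g$ and we get the clean form above. With the ordering $g\cdot g^i$ we would instead get $\psi_{g^i}\phi_g+\psi_g$, which also works but requires an extra rearrangement. Either way there is no real obstacle.
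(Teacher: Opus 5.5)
Your proof is correct and follows the paper's argument: all three parts come from condition (1) of Definition~\ref{A} with suitable choices of $g,g'$, as the paper does. The differences are cosmetic. In (2) the paper factors $1=g^{-1}g$, which gives the formula directly without your substitution $v\mapsto\phi_{g^{-1}}(v)$. In (3) the paper uses $g^{k+1}=g\cdot g^{k}$, which in fact needs no more rearrangement than your ordering, since $\psi_{g^k}\phi_g+\psi_g$ expands immediately to the stated sum.
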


\begin{proof} 
\begin{enumerate}

\item Since $\phi_1 = I_V$, we have
\begin{align*}
   \psi_1(v) &=\psi_{1.1}(v)  = \psi_1(\phi_1(v)) + \psi_1(v) = \psi_1(v)+ \psi_1(v)\\
   \Rightarrow \psi_1(v) &= 0, \forall~  v\in V.
\end{align*}

\item  Since $\psi_1(v)=0$ for all $v\in V$, we have
\begin{align*}
 &0=\psi_{g^{-1}g}(v) =\psi_{g}(\phi_{g^{-1}}(v))+ \psi_{g^{-1}}(v) \\
 \implies & \psi_{g^{-1}}(v)= -\psi_{g}(\phi_{g^{-1}}(v)) ~\text{for all}~  v\in V ~\text{and}~g\in G.
\end{align*}
\item By Definition \ref{A}, we have
\begin{align*}
    \psi_{g^2}(v)=\psi_{gg}(v)&=\psi_{ggg^{-1}}(\phi_g(v)) + \psi_g(v)\\& = \psi_g(\phi_g(v)+ v)\\&= \psi_g(\phi_g + I_V)(v)
\end{align*}  for all $g\in G$ and $v\in V$. Assume that $\psi_{g^k}=\psi_{g}(\phi_{g^{k-1}} + \phi_{g^{k-2}}+ \cdots + \phi_g+ I_V)$. Then 
    \begin{align*}
    \psi_{g^{k+1}}(v)=\psi_{gg^k}(v)&=\psi_{gg^{k}g^{-1}}(\phi_g(v)) + \psi_g(v) \\&= \psi_{g^k}(\phi_g(v)) + \psi_g(v) \\&= \psi_{g}(\phi_{g^{k-1}} + \phi_{g^{k-2}}+ \cdots + \phi_g+ I_V)(\phi_g(v)) + \psi_g(v)\\&=\psi_g(\phi_{g^{k}} + \phi_{g^{k-1}}+ \cdots + \phi_g+ I_V)(v)
\end{align*} for all $g\in G$ and $v\in V$.
\end{enumerate}
\end{proof}

\begin{corollary}\label{3.2}
Let $G$ be a finite multiplicative Lie algebra and $(\phi,\psi)$ be a representation of $G$ such that $\phi_g = I_V$ for every $g\in G$. Suppose the characteristic of the field $\mathbb{K}$ is zero. Then $\psi_g = 0$ for every $g\in G$.
\end{corollary}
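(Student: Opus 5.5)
The plan is to apply Proposition \ref{1}(3) with $i$ equal to the order of $g$. Fix $g\in G$ and let $n=|g|$, which is finite because $G$ is finite, so that $g^n=1$. By Proposition \ref{1}(1) we have $\psi_{g^n}=\psi_1=0_V$. On the other hand, Proposition \ref{1}(3) with $i=n$ gives
\[
\psi_{g^n}=\psi_g\bigl(\phi_{g^{n-1}}+\phi_{g^{n-2}}+\cdots+\phi_g+I_V\bigr).
\]

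Now use the hypothesis $\phi_h=I_V$ for every $h\in G$. Each of the $n$ summands inside the bracket is then $I_V$, so the bracket equals $n\cdot I_V$. Since $\psi_g\in End(V)$ is linear, this yields
\[
0_V=\psi_{g^n}=\psi_g(n\cdot I_V)=n\,\psi_g .
\]

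Finally, invoke the characteristic assumption. Since $\mathrm{char}\,\mathbb{K}=0$, the element $n\cdot 1_{\mathbb{K}}$ is nonzero and hence invertible in $\mathbb{K}$. Multiplying by $n^{-1}$ gives $\psi_g(v)=0$ for all $v\in V$, that is, $\psi_g=0$. As $g$ was arbitrary, this proves the statement.

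There is no real obstacle. The only points needing care are that finiteness of $G$ guarantees a finite order $n$, and that the linearity of $\psi_g$ lets us pull out the scalar $n$. The case $g=1$ is already covered directly by Proposition \ref{1}(1). In fact the argument only uses that $\mathrm{char}\,\mathbb{K}$ does not divide the order of $g$, so it would also work whenever $\mathrm{char}\,\mathbb{K}\nmid |G|$.
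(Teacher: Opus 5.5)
Your proof is correct and is essentially the paper's argument: take $m$ with $g^m=1$, combine $\psi_1=0_V$ with Proposition~\ref{1}(3) under trivial $\phi$ to obtain $m\,\psi_g=0$, then divide by $m$ using characteristic zero. Your closing remark, that it suffices for $\mathrm{char}\,\mathbb{K}$ not to divide $|G|$, is also correct, since the order of $g$ divides $|G|$ by Lagrange's theorem.
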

\begin{proof}
Let $g\in G$. Since $G$ is finite, $g^m = 1 $ for some $m\in \mathbb{N}$. Thus $\psi_{g^m}(v) = \psi_{1}(v) = 0$. By identity $(3)$ of Proposition \ref{1}, $\psi_{g}(\phi_{g^{m-1}} + \phi_{g^{m-2}}+ \cdots + \phi_g+ I_V) (v) = 0$. Thus $\psi_{g}(mv) = 0$. Therefore, $\psi_g = 0$ for every $g\in G$.
\end{proof}

\begin{remark}\label{2}
\begin{enumerate}
\item If $\phi_g = I_V$ for all $g\in G$, then $\psi : G \to End(V)$ is a multiplicative Lie algebra homomorphism such that $\psi_{^gg'}(v)=\psi_{g'}(v)$ for all $v\in V$, where $End(V)$ is a multiplicative Lie algebra with the multiplicative Lie algebra structure $\star$ defined as 
$(f\star g)(v) = f(g(v))- g(f(v))$ for all $f, g \in End(V)$.
\item If the underlying group structure of $G$ is abelian and $\phi_g = I_V$ for all $g\in G$, then $\psi : G \to End(V)$ is a multiplicative Lie algebra homomorphism. In particular, if $G$ is a Lie algebra over a field $\mathbb{K}$, then the pair $(I_V,\psi)$ is a Lie algebra representation of $G$. 

    \item An one-dimensional representation of a multiplicative Lie algebra $G$ is a pair $(\phi,\psi)$,  where  $\phi : G\to \mathbb{K}^{*}$ is a group homomorphism and $\psi : G \to \mathbb{K}$ is a map satisfying the following conditions:
\begin{enumerate}
    \item $\psi(^gg')= \psi(g')$
    \item $\psi(gg')= \psi({g'})\phi(g) + \psi(g)$
    \item $\psi(g \star g')=0$
\end{enumerate}
for all $g, g'\in G$.
In particular, if the underlying group structure of $G$ is abelian and $\phi(g) = 1$ for all $g\in G$, then   $\psi: G \to \mathbb{K}$ is a multiplicative Lie algebra homomorphism such that $\psi(g \star g') = 0$ for all $g, g'\in G$. 

\item If the underlying group structure of $G$ is abelian with trivial multiplicative Lie algebra structure, then an one-dimensional representation of $G$ is a pair $(\phi,\psi)$,  where  $\phi: G\to \mathbb{K}^{*}$ is a group homomorphism and $\psi: G \to \mathbb{K}$ is map such that $\psi(gg')= \psi(g')\phi(g) + \psi(g)$ for all $g,g'\in G$.
\end{enumerate}

\end{remark}

\begin{example}
  By Proposition \ref{1} and Remark \ref{2}, all one dimensional multiplicative Lie algebra representation of a cyclic group $G$ generated by $g$ is of the form $(\phi, \psi)$, where $\phi: G \to \C^*$ is a group homomorphism and $\psi$ is a map from $G$ to $\C$ such that $\psi(g^k)=\psi(g)(\phi(g)^{k-1}+ \phi(g)^{k-2}+ \cdots + 1),$ for all $k$, $1\leq k\leq n$.
\end{example}

    
\begin{example}
    Consider $G=V_4=<a,b| ~ a^2=e=b^2, ab=ba>$ Klein's four-group with multiplicative Lie algebra structure given by $a\star b=a$.
    Let $(\phi,\psi)$ be an one-dimensional representation of $G$, where $\phi: G\to \C^*$ is defined by $\phi(a)=1,\phi(b)=-1$. Then by Remark \ref{2}, we have  $\psi(e)=0$,  $\psi(a)=0$ and
    $\psi(ab)=\psi(b)\phi(a)+ \psi(a)=\psi(b)$. This shows that, if we take a map $\psi: G \to \C$ such that $\psi(a)=0$, $\psi(b) =\psi(ab)= \lambda $  for any $\lambda \in \C$, then $(\phi,\psi)$ is an one-dimensional representation of $G$.
\end{example}
\begin{example}
    Consider the group $S_3 = \langle a,b ~|~ a^2 = b^3 =1, aba = b^{-1}\rangle$ with improper multiplicative Lie algebra structure. Let $(\phi,\psi)$ be an one-dimensional representation of $G$, where 
$\phi: S_3 \rightarrow \C^*$ defined by $\phi(a)=-1$ and $\phi(b)=1$. Then

 \noindent\textbf{Claim:} $\psi(b)=\psi(b^2)=0$, and $\psi(a) =\psi(ab)=\psi(ab^2)$.
\begin{proof}
  By Proposition \ref{1} and Remark \ref{2}, we have 
$\psi(1)=\psi(b^3)=\psi(b)(\phi(b^2)+ \phi(b)+ 1)=3\psi(b)$ which gives $\psi(b)=0,$ and 
$\psi(b^2)=\psi(b)\phi(b)+ \psi(b)$ implies $\psi(b^2)=0$ and $\psi(ab)=\psi({^ab})\phi(a)+ \psi(a)$ which gives $\psi(ab)=\psi(a)$, similarly, we have $\psi(ab^2)=\psi(a).$  
\end{proof}
  
\end{example}

\begin{example} 
    Consider $G=D_4=<a,b|~ a^2=1=b^4, aba=b^{-1}>$ with multiplicative Lie algebra structure given by $a\star b=b$ $($\cite{MS1}$)$. Let $(\phi,\psi)$ be an one-dimensional representation of $G$, where $\phi:D_4 \to \C^*$ is defined by $\phi(a)=1$ and $\phi(b)=-1$. Then
    
    \noindent\textbf{Claim:} $\psi(x)=0$ for all $x\in D_4$.
    \begin{proof}
   By Proposition \ref{1} and Remark \ref{2}, we have $\psi(a^2)=\psi(a)\phi(a)+ \psi(a)=2 \psi(a)$ which implies $\psi(a)=0$.

  Also, $\psi(ab^2ab^2)=\psi(ab^2)\phi(ab^2)+ \psi(ab^2)=2 \psi(ab^2)$ implies $\psi(ab^2)=0$. 
  
  Now, $\psi(a\star b)\phi(b)=\psi(a)\psi(b)-\psi({^ab})\psi(a)=0$ which gives $\psi(b)=0$. Then by Remark \ref{2}, we have $\psi(b^2)=0=\psi(b^3)$. Now, $\psi(ab)=\psi({^ab})\phi(a)+\psi(a)=0$ implies $\psi(ab)=0$ and $\psi(ab^3)=\psi({^ab^3})\phi(a)+\psi(a)=0$ implies $\psi(ab^3)=0$.
        
    \end{proof}
 
\end{example}

\begin{example}
    Consider $G=D_4=<a,b|~ a^2=1=b^4, aba=b^{-1}>$ with multiplicative Lie algebra structure given by $a\star b=b^2$ $($\cite{MS1}$)$. Let $(\phi,\psi)$ be an one-dimensional representation of $G$.
    Suppose $\phi:D_4 \to \C^*$ is a group homomorphism defined on generators by $\phi(a)=1$ and $\phi(b)=-1$. Then 

    \textbf{Claim:} $\psi(a)=\psi(b^2)= \psi(ab^2)=0$ and $\psi(ab)=\psi(ab^3)=\psi(b)=\psi(b^3) $.
    \begin{proof}
        By Proposition \ref{1} and Definition \ref{A}, we have $\psi(a^2)=\psi(a)\phi(a)+ \psi(a)=2 \psi(a)$ which gives $\psi(a)=0$.
        Similarly, $\psi(ab^2ab^2)=\psi(ab^2)\phi(ab^2)+ \psi(ab^2)=2 \psi(ab^2)$ gives $\psi(ab^2)=0$ and $\psi(a\star b)\phi(b)=\psi(a)\psi(b)-\psi({^ab})\psi(a)=0$ which implies $\psi(b^2)=0$. 
        
        Now, $\psi(ab)=\psi({^ab})\phi(a)+\psi(a)$ implies $\psi(ab)=\psi(b^3)$ and $\psi(ab^3)=\psi({^ab^3})\phi(a)+\psi(a)$ implies $\psi(ab^3)=\psi(b)$ and $\psi(b^3)=\psi(b)(\phi(b^2)+ \phi(b)+ 1)$ implies $\psi(b^3)=\psi(b)$.
    \end{proof}
\end{example}

\begin{example}\label{1.9}
    Consider $G=\mathbb{Z}/4\mathbb{Z}$ with trivial multiplicative Lie algebra structure. Let $(\phi,\psi)$ be a two-dimensional representation of $G$. Suppose $\phi:G \to GL_2(\C)$ is a group homomorphism defined on generator by $\phi(1)=\begin{bmatrix}
i & 0 \\
0 & -i
\end{bmatrix}$. Then 

\textbf{Claim:} $\psi(1)=\begin{bmatrix}
a & 0 \\
0 & d
\end{bmatrix},$ where $a,d\in \C$.
\begin{proof}
    By  identity $(2)$ of Definition \ref{A}, we have 
    $\psi(g')\phi(g)=\phi(g)\psi(g')$ for all $g,g' \in G.$ Now, for $g=g'=1,$ we have
    \begin{align*}
        \psi(1)\phi(1) &=\phi(1)\psi(1)\\
        \begin{bmatrix}
a & b \\
c & d
\end{bmatrix}\begin{bmatrix}
i & 0 \\
0 & -i
\end{bmatrix} &= \begin{bmatrix}
i & 0 \\
0 & -i
\end{bmatrix} \begin{bmatrix}
a & b \\
c & d
\end{bmatrix}\\
    \end{align*} after comparing, we get $c=b=0$. Thus, $\psi(1)=\begin{bmatrix}
a & 0 \\
0 & d
\end{bmatrix},$ where $a,d\in \C$.
\end{proof}

\end{example}

\begin{theorem}
Let $G$ be a multiplicative Lie algebra structure. Then there is a one-to-one correspondence between one-dimensional representations $(\phi,\psi)$ of $G$ and one-dimensional representations $(\widetilde{\phi}, \widetilde{\psi})$ of $G/G'$.
\end{theorem}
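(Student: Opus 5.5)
The plan is to show that every one-dimensional representation of $G$ factors uniquely through the canonical projection $\pi: G\to G/G'$, and conversely that composing with $\pi$ turns representations of $G/G'$ into representations of $G$. Since $G'$ is an ideal of $G$, the quotient $G/G'$ is a multiplicative Lie algebra with $xG'\star yG' = (x\star y)G'$, and $\pi$ is a surjective multiplicative Lie algebra homomorphism. Throughout I use the description of one-dimensional representations in Remark \ref{2}(3): $\phi:G\to\mathbb{K}^*$ is a group homomorphism and $\psi:G\to\mathbb{K}$ satisfies (a) $\psi({}^gg')=\psi(g')$, (b) $\psi(gg')=\psi(g')\phi(g)+\psi(g)$, and (c) $\psi(g\star g')=0$.

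\textbf{From $G/G'$ to $G$.} Given $(\widetilde\phi,\widetilde\psi)$ on $G/G'$, set $\phi=\widetilde\phi\circ\pi$ and $\psi=\widetilde\psi\circ\pi$. Since $\pi$ is a group homomorphism preserving conjugation and $\star$, conditions (a)--(c) pull back directly. This assignment is injective because $\pi$ is surjective.

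\textbf{From $G$ to $G/G'$.} Let $(\phi,\psi)$ be a one-dimensional representation of $G$.
\begin{enumerate}
\item Since $\mathbb{K}^*$ is abelian, $\phi$ is trivial on $G'$, so it induces $\widetilde\phi$ on $G/G'$.
\item I next show that $\psi$ vanishes on $G'$. For a commutator, write $[g,h]={}^gh\cdot h^{-1}$. Then (b) and (a) give
\[\psi([g,h])=\psi(h^{-1})\phi({}^gh)+\psi({}^gh)=\psi(h^{-1})\phi(h)+\psi(h).\]
By Proposition \ref{1}(1) and (b), $0=\psi(hh^{-1})=\psi(h^{-1})\phi(h)+\psi(h)$, so $\psi([g,h])=0$.
\item On $G'$ we have $\phi\equiv 1$, so (b) makes $\psi$ additive there: $\psi(xy)=\psi(x)+\psi(y)$ for $x,y\in G'$. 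Since $\psi$ kills the generating commutators, it vanishes on all of $G'$.
\item Now $\psi$ is constant on cosets. For $x\in G'$, (b) gives $\psi(yx)=\psi(x)\phi(y)+\psi(y)=\psi(y)$. Hence $\widetilde\psi(yG')=\psi(y)$ is well defined.
\item Conditions (a)--(c) for $(\widetilde\phi,\widetilde\psi)$ follow by evaluating on representatives, using that the operations in $G/G'$ are induced from $G$.
\end{enumerate}
Clearly $\widetilde\phi\circ\pi=\phi$ and $\widetilde\psi\circ\pi=\psi$, so the two constructions are mutually inverse, which gives the bijection.

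The main step is showing that $\psi$ vanishes on $G'$. The key observations are the rewriting $[g,h]={}^gh\cdot h^{-1}$, which reduces the commutator computation to invariance (a) together with Proposition \ref{1}, and the fact that $\psi$ is additive on $G'$ because $\phi$ is trivial there. Everything else is routine.
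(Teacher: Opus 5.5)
Your proposal is correct and follows essentially the same route as the paper: show $G'\subseteq\ker\phi$ and $\psi|_{G'}=0$, descend to $G/G'$, and pull back along $\pi$ for the converse. The differences are cosmetic. You factor $[g,h]={}^gh\cdot h^{-1}$ and use Proposition \ref{1}(1), where the paper writes $[g,h]=g\,{}^hg^{-1}$ and uses Proposition \ref{1}(2); your observation that $\psi|_{G'}$ is additive (since $\phi\equiv 1$ there) handles arbitrary products of commutators more cleanly than the paper's two-commutator check; and you make explicit that the two constructions are mutually inverse.
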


\begin{proof}
Let $(\phi,\psi)$ be a one-dimensional representation of $G$. We first show that $G'\subseteq\ker\phi$ and $\psi (g) = 0$, for all $g\in G'$.

Since $\mathbb{C}^{*}$ is abelian and $\phi$ is a group homomorphism, $\phi([g,h])=[\phi(g),\phi(h)]=1$, for all $g,h\in G$, .
Hence $G'\subseteq\ker\phi$.

On the other hand,
\begin{align*}
\psi([g,h])
=\psi(g{}~ ^{h}g^{-1})
=\psi({}^{h}g^{-1})\phi(g)+\psi(g)\
=\psi(g^{-1})\phi(g)+\psi(g)\
=0 ~ \text{by Proposition \ref{1} (2)}.
\end{align*}
Thus, $\psi$ vanishes on every commutator. Moreover, if $x = [a, b] ,y = [c, d]\in G' $, then $\psi (x) = \psi (y) = 0$. Also
\[
\psi(xy)
=\psi({}^{x}y)\phi(x)+\psi(x)
=\psi(y)\phi(x)+\psi(x)=0.
\]
 Therefore, $\psi (g) = 0$, for all $g\in G'$. 

Since $G'\subseteq\ker\phi$, there exists a unique group homomorphism
$\widetilde{\phi}:G/G'\to \mathbb{C}^{*}$
such that $\widetilde{\phi}(gG')=\phi(g)$.

Furthermore, define $\widetilde{\psi}:G/G'\to \mathbb{C}$
by $\widetilde{\psi}(gG')=\psi(g)$.
This map is well defined. Indeed, if $gG'=hG'$, then $g=ah$ for some $a\in G'$. Therefore $\psi(g)=\psi(ah)=\psi({}^{a}h)\phi(a)+\psi(a)=\psi(h),$ since $\phi(a)=1$ and $\psi(a)=0$ for every $a\in G'$.

We now verify that $(\widetilde{\phi},\widetilde{\psi})$ is a one-dimensional representation of $G/G'$. By the identities of Definition \ref{A}, for $g,h\in G$, we have
\begin{align*}
\widetilde{\psi}((gG')(hG'))
=\widetilde{\psi}(ghG')\
=\psi(gh)\
=\psi({}^{g}h)\phi(g)+\psi(g)\
=\widetilde{\psi}({}^{g}hG')\widetilde{\phi}(gG')
+\widetilde{\psi}(gG').
\end{align*}
\begin{align*}
    \widetilde{\psi}({}^{g}hG')
=\psi({}^{g}h)
=\psi(h)
=\widetilde{\psi}(hG').
\end{align*}
\begin{align*}
    \widetilde{\psi}((gG')\widetilde{\star}(hG'))
=\widetilde{\psi}((g\star h)G')
=\psi(g\star h)=0.
\end{align*}

 Hence $(\widetilde{\phi},\widetilde{\psi})$ is a one-dimensional representation of $G/G'$.

Conversely, let $(\widetilde{\phi},\widetilde{\psi})$ be a one-dimensional representation of $G/G'$. Let $\pi: G \to G/G'$ be the quotient multiplicative Lie algebra homomorphism. Now, take
$\phi=\widetilde{\phi}\circ\pi,
~\psi=\widetilde{\psi}\circ\pi.$ Then, clearly $\phi$ is a group homomorphism.

For $g,h\in G$,
\begin{align*}
\psi(gh)
=\widetilde{\psi}(ghG')\
=\widetilde{\psi}({}^{g}hG')\widetilde{\phi}(gG')
+\widetilde{\psi}(gG')\
=\psi({}^{g}h)\phi(g)+\psi(g).
\end{align*}
Furthermore,
\[
\psi({}^{g}h)
=\widetilde{\psi}({}^{g}hG')
=\widetilde{\psi}(hG')
=\psi(h),
\]
and
\begin{align*}
\psi(g\star h)
=\widetilde{\psi}((g\star h)G')\
=\widetilde{\psi}((gG')\widetilde{\star}(hG'))\
=0.
\end{align*}
Thus $(\phi,\psi)$ is a one-dimensional representation of $G$. This completes the proof.
\end{proof}

\begin{definition}
     Two representation $(\phi^1,\psi^1)$ and $(\phi^2, \psi^2)$ $($where, $\phi^{i} : G \rightarrow GL(V_{i})$ and $\psi^{i}: G \rightarrow End(V_{i}))$ are said to be $\textbf{equivalent}$ if  there exists an isomorphism $T : V_{1} \rightarrow V_{2}$ such that $\phi^2_{g}\circ T=T \circ \phi^1_g$ and $\psi^2_{g}\circ T=T \circ \psi^1_g$ for all $g\in G.$ In that case, we write $(\phi^1,\psi^1) \sim (\phi^2, \psi^2)$.
\end{definition}

\begin{definition}
 Let $(\phi,\psi)$ be a representation of a multiplicative Lie algebra $G$. A subspace $W\leq V$ is $G$-invariant if, $\phi_g(w)\in W$ and $\psi_g(w)\in W$ for all $g\in G$ and $w\in W$.
\end{definition}

For $(\phi,\psi)$ from Example \ref{1.9}, $\C e_1$ and $\C e_2$ both are $\mathbb{Z}/4\mathbb{Z}$-invariant.

\begin{definition}
    Let $(\phi^1,\psi^1)$ and $(\phi^2,\psi^2)$ be two representations of a multiplicative Lie algebra $G$ of degree $n_1$ and $n_2$, respectively. Then their (external) direct sum is a pair $(\phi^1 \oplus \phi^2,\psi^1\oplus \psi^2 )$, where $\phi^1 \oplus \phi^2: G \to GL(V_1 \oplus V_2)$ is given by $$(\phi^1 \oplus \phi^2)_g(v_1, v_2)= (\phi^1_g(v_1),\phi^2_g(v_2))$$ and $\psi^1 \oplus \psi^2: G \to End(V_1 \oplus V_2)$ is given by $$(\psi^1 \oplus \psi^2)_g(v_1, v_2)= (\psi^1_g(v_1),\psi^2_g(v_2)).$$
\end{definition}
\begin{example}
    Let $(\phi^1,\psi^1)$ and $(\phi^2,\psi^2)$ be two representations of $\mathbb{Z}/4\mathbb{Z}$, where $\phi^1: \mathbb{Z}/4\mathbb{Z} \to \C^*$ given by $\phi^1(1)=i$, $\psi^1: \mathbb{Z}/4\mathbb{Z} \to \C$ given by $\psi^1(1)=a$, and $\phi^2: \mathbb{Z}/4\mathbb{Z} \to \C^*$ by $\phi^2(1)=-i$, $\psi^2: \mathbb{Z}/4\mathbb{Z} \to \C$ given by $\psi^2(1)=b$. Then $(\phi^1 \oplus \phi^2)(1)=\begin{bmatrix}
        i & 0 \\ 0 & -i
    \end{bmatrix}$ and $(\psi^1 \oplus \psi^2)(1)=\begin{bmatrix}
        a & 0 \\ 0 & b
    \end{bmatrix}$.
\end{example}

\begin{example}\label{1.12}
    Consider $G=S_3$ with trivial multiplicative Lie algebra. Let $(\phi,\psi)$ be a degree two representation of $S_3$, where $\phi: S_3 \rightarrow GL_2(\C)$ be specified on generators $(12)$ and $(123)$ by \[\phi(12)=
\begin{bmatrix}
-1 & -1 \\
0 & 1
\end{bmatrix}
 , \phi(123)=
\begin{bmatrix}
-1 & -1 \\
1 & 0
\end{bmatrix}.
\] Then 

\textbf{Claim:} $\psi:S_3 \to M_2(\C)$ is zero map, that is, $\psi(g)=\begin{bmatrix}
0 & 0 \\
0 & 0
\end{bmatrix}$ for all $g\in S_3$.

\begin{proof}
Suppose $\psi(12)=\begin{bmatrix}
    a & b  \\ c & d
\end{bmatrix},$ where $a,b,c,d\in \mathbb{C}.$ Since $(12)^{2} = 1$, we have $\psi(12)(\phi(12)+ I_2)=0$ which gives $a=2b, c=2d$. Also,  by identity $(2)$ of Definition \ref{A}, we have $\psi(12)\phi(12)=\phi(12)\psi(12)$ which gives $d=0,$. Thus, $\psi(12)=\begin{bmatrix}
    2b & b \\ 0 & 0
\end{bmatrix}.$

Now, by the property of $\psi$, it is easy to see that 

$$\psi(23)= \phi(13)\psi(12)\phi(12) ~~\text{and}~~ \psi(13)= \phi(23)\psi(12)\phi(23),$$

$$\psi(123)= \psi(23) \phi(13) + \psi(13)       ,$$
which implies $\psi(23)=\begin{bmatrix}
    b & -b \\ -b & b
\end{bmatrix}$, $\psi(13)=\begin{bmatrix}
    0 & 0 \\ b & 2b
\end{bmatrix}$  and $\psi(123)=\begin{bmatrix}
    2b & b \\ -b & b
\end{bmatrix}.$

Now, by identity $(3)$ of Definition \ref{A}, we have 
$\psi(23)\psi(13)=\psi(13)\psi(12)$ which gives $b=0.$ Therefore, $\psi: S_3 \to M_2(\mathbb{C})$ is zero map.
\end{proof}

Let $(\phi',\psi')$ be another degree one representation of $S_3$, where $\phi'(g)=1$ for all $g\in S_3$. Then by corollary \ref{3.2}, we have $\psi':S_3 \to M_2(\C)$ is zero map, that is, $\psi'(g)=0$ for all $g\in S_3$.

Now, their direct product is $(\phi \oplus \phi')(12)=\begin{bmatrix}
        -1 & -1 & 0 \\ 0 & 1 & 0 \\ 0 & 0 &1
    \end{bmatrix}$, $(\phi \oplus \phi')(123)=\begin{bmatrix}
        -1 & -1 & 0 \\ 1 & 0 & 0 \\ 0 & 0 &1
    \end{bmatrix}$ and $(\psi \oplus \psi')(g)=\begin{bmatrix}
        0 & 0 & 0 \\ 0 & 0& 0 \\ 0 & 0 & 0
    \end{bmatrix}$ for all $g\in S_3$.

\end{example}

\begin{definition}
    A representation $(\phi,\psi)$ of a multiplicative Lie algebra $G$ is said to be irreducible if the only $G$-invariant subspaces are $\{0\}$ and $V$.
\end{definition}
\begin{example}
    Any degree one representation $(\phi,\psi)$ of $G$ is irreducible, since $\C$ has no proper non-zero subspaces.
\end{example}
\begin{example}
    The representations from Example \ref{1.9} is not irreducible as $\C e_1$ and $\C e_2$ are $\mathbb{Z}/4\mathbb{Z}$-invariant subspaces for $(\phi,\psi)$.
\end{example}

\begin{proposition}
    If $(\phi,\psi)$ is a representation of degree $2$, then $(\phi,\psi)$ is irreducible if and only if there is no common eigenvector $v$ to all $\phi_g$ and $\psi_g$ with $g\in G$.
\end{proposition}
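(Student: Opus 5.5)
The plan is to argue both directions through the correspondence between one-dimensional $G$-invariant subspaces and common eigenvectors, using that $\dim V = 2$. A proper nonzero subspace of a $2$-dimensional space is exactly a line $\mathbb{K}v$ with $v\neq 0$. So $(\phi,\psi)$ is reducible if and only if some line $\mathbb{K}v$ is $G$-invariant. We must therefore show: a line $\mathbb{K}v$ is $G$-invariant if and only if $v$ is a common eigenvector of all $\phi_g$ and $\psi_g$, $g\in G$.

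\emph{Invariant line gives a common eigenvector.} Suppose $W=\mathbb{K}v$ is $G$-invariant, with $v\neq 0$. By the definition of $G$-invariance, $\phi_g(v)\in \mathbb{K}v$ and $\psi_g(v)\in\mathbb{K}v$ for every $g\in G$. Hence there are scalars $\lambda_g,\mu_g\in\mathbb{K}$ with $\phi_g(v)=\lambda_g v$ and $\psi_g(v)=\mu_g v$. Thus $v$ is a common eigenvector for all $\phi_g$ and $\psi_g$. Here we allow the eigenvalue $0$ for $\psi_g$; this is forced for $g=1$, since $\psi_1=0_V$ by Proposition \ref{1}(1). Note also that $\lambda_g\neq 0$ because $\phi_g\in GL(V)$.

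\emph{Common eigenvector gives an invariant line.} Conversely, let $v\neq 0$ be a common eigenvector, with $\phi_g(v)=\lambda_g v$ and $\psi_g(v)=\mu_g v$. For any $w=cv\in\mathbb{K}v$, linearity gives $\phi_g(w)=c\lambda_g v\in\mathbb{K}v$ and $\psi_g(w)=c\mu_g v\in\mathbb{K}v$. So $\mathbb{K}v$ is a $G$-invariant subspace. It is nonzero, and it is proper because $\dim V=2$. Hence the representation is not irreducible.

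Combining the two directions: $(\phi,\psi)$ is reducible exactly when a common eigenvector exists, which is the contrapositive of the statement. No identities from Definition \ref{A} are needed; the argument is purely linear-algebraic. The only points requiring care are making "eigenvector" include eigenvalue $0$ for the maps $\psi_g$, and using $\dim V=2$ so that every proper nonzero subspace is a line. The same argument would not work for $\dim V\geq 3$, where invariant planes can exist without any common eigenvector.
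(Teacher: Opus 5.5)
Your proof is correct and follows the paper's argument: since $\dim V=2$, every proper nonzero $G$-invariant subspace is a line $\mathbb{K}v$, and invariance of that line means exactly that $v$ is a common eigenvector of all $\phi_g$ and $\psi_g$. You also write out the converse (a common eigenvector spans a proper invariant line), which the paper leaves implicit, and you correctly note that eigenvalue $0$ must be allowed for the maps $\psi_g$.
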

    \begin{proof}
        Since dim ${V}=2$, any non-zero proper $G$-invariant subspace $W$ is one-dimensional. So $W=\mathbb{\mathbb{K}}v$ for some $v\in V$. Let $x\in G$. Then by $G$-invariance of $W$ we have $\phi_x(v),\psi_x(v) \in W=\mathbb{K}v$. It follows that $v$ must be an eigenvector for all $\phi_x$ and $\psi_x$ with $x\in G.$
    \end{proof}
\begin{example}
    
Consider $G=S_3$ with trivial multiplicative Lie algebra.
    The representation $(\phi,\psi)$ of $S_3$, where $\phi: S_3 \rightarrow GL_2(\C)$ be specified on generators $(12)$ and $(123)$ by \[\phi(12)=
\begin{bmatrix}
-1 & -1 \\
0 & 1
\end{bmatrix}
 , \phi(123)=
\begin{bmatrix}
-1 & -1 \\
1 & 0
\end{bmatrix}
\] and $\psi:S_3 \to M_2(\C)$ is zero map, is irreducible.
\end{example}
\begin{proof}
By the above Proposition, it is sufficient to show that $\phi(12)$ and $\phi(123)$ do not have a common eigenvector. Indeed, direct computation shows that $\phi(12)$ has eigenvalues $1$ and $-1$ with eigen spaces $\mathbb{C}e_1$ and $\mathbb{C}\begin{bmatrix}
        -1 \\ 2
    \end{bmatrix}$, respectively. Clearly, $e_1$ is not an eigenvector of $\phi(123)$ as
    $\phi(123)\begin{bmatrix}
        1 \\ 0
    \end{bmatrix}=\begin{bmatrix}
        -1 \\ 1
    \end{bmatrix}$. Also, $\phi(123)\begin{bmatrix}
        -1 \\ 2
    \end{bmatrix}=\begin{bmatrix}
        -1 \\ -1
    \end{bmatrix}$. Thus, $\phi(123)$ and $\phi(12)$ have no common eigenvector, which implies that $(\phi,\psi)$ is irreducible by the above discussion.
    \end{proof}
\begin{definition}
    Let $G$ be a multiplicative Lie algebra. A representation $(\phi, \psi)$ of $G$ on $V$ is said to be completely reducible if $V = V_1 \oplus V_2 \oplus \cdots \oplus V_n$, where the $V_i$ are $G$-invariant subspaces and $(\phi,\psi)|_{V_i}$ is irreducible for all $i=1,\cdots ,n$.
\end{definition}
 Equivalently, $(\phi,\psi)$ is completely reducible if $(\phi,\psi) \sim (\phi^{(1)}\oplus \phi^{(2)}\oplus \cdots \oplus \phi^{(n)},\psi^{(1)}\oplus \psi^{(2)}\oplus \cdots \oplus \psi^{(n)})$ where the $(\phi^{(i)},\psi^{(i)})$ are irreducible representations.

 \begin{definition}
     A non-trivial representation of a multiplicative Lie algebra $G$ on $V$ is said to be \textit{decomposable} if $V=V_1 \oplus V_2$ with $V_1, V_2$  non-zero $G$-invariant subspaces. Otherwise, $V$ is called \textit{indecomposable}.
 \end{definition}
\begin{lemma}
    Let $(\phi,\psi)$ be a representation of $G$ on $V$. If $(\phi,\psi)$ is equivalent to a decomposable representation. Then $(\phi,\psi)$ is decomposable.
\end{lemma}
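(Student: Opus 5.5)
The plan is to transport a decomposition along the intertwining isomorphism. Suppose $(\phi,\psi)$ on $V$ is equivalent to a decomposable representation $(\phi',\psi')$ on $V'$. By the definition of equivalence there is a linear isomorphism $T:V\to V'$ with $\phi'_g\circ T=T\circ\phi_g$ and $\psi'_g\circ T=T\circ\psi_g$ for all $g\in G$. (If the equivalence is given in the other direction, replace $T$ by $T^{-1}$; inverting both identities shows $T^{-1}$ also intertwines.) Decomposability of $(\phi',\psi')$ gives $V'=V_1'\oplus V_2'$ with $V_1',V_2'$ non-zero and $G$-invariant.

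Set $V_i=T^{-1}(V_i')$ for $i=1,2$. Since $T^{-1}$ is a linear isomorphism, each $V_i$ is a non-zero subspace and $V=V_1\oplus V_2$. Indeed, any $v\in V$ has $T(v)=v_1'+v_2'$, so $v=T^{-1}v_1'+T^{-1}v_2'$. Moreover, $V_1\cap V_2=T^{-1}(V_1'\cap V_2')=\{0\}$.

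It remains to check $G$-invariance, which is the only step using the representation structure. For $w\in V_i$ we have $T(w)\in V_i'$, and therefore
\[
\phi_g(w)=T^{-1}\phi'_g(T(w))\in T^{-1}(V_i')=V_i, \qquad \psi_g(w)=T^{-1}\psi'_g(T(w))\in V_i .
\]
Both identities follow from the intertwining relations, and the memberships follow from the $G$-invariance of $V_i'$ under $\phi'_g$ and $\psi'_g$.

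Finally, the definition of decomposable requires the representation to be non-trivial. Since $T$ conjugates $\phi'$ to $\phi$ and $\psi'$ to $\psi$, $(\phi,\psi)$ is trivial exactly when $(\phi',\psi')$ is, so non-triviality also transfers. Hence $(\phi,\psi)$ is decomposable. No step is a genuine obstacle; the only point needing care is using the intertwining relations for both $\phi$ and $\psi$ rather than for $\phi$ alone.
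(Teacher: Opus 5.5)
Your proposal is correct and follows essentially the same route as the paper: pull back the decomposition $V'=V_1'\oplus V_2'$ via $V_i=T^{-1}(V_i')$, check that $V=V_1\oplus V_2$, and use the intertwining relations for both $\phi$ and $\psi$ to get $G$-invariance. You are in fact slightly more careful than the paper, which does not explicitly check that the $V_i$ are non-zero or that non-triviality transfers along $T$.
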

\begin{proof}
    Let $(\phi',\psi')$ be a decomposable representation of $G$ on $W$ such that $(\phi,\psi)\sim (\phi',\psi')$. Then there exists a vector space homomorphism $T:V \to W$ such that $$\phi_g' \circ T=T \circ \phi_g ~\text{and}~ \psi_g' \circ T=T \circ \psi_g$$ for all $g\in G.$ Since $(\phi',\psi')$ is decomposable, we have $W=W_1 \oplus W_2$, where $W_1,W_2$ are non-zero $G$-invariant subspaces.
    
    Define $V_1=T^{-1}(W_1)$ and $ V_2=T^{-1}(W_2)$. We claim that $V=V_1 \oplus V_2$. Indeed, if $v\in V_1 \cap V_2$, then $Tv\in W_1 \cap W_2=\{0\}$, but $T$ is injective, so $v=0$. Thus $V_1 \cap V_2=\{0\}$. Now, let $v\in V$, then $Tv=w_1 + w_2$ for some $w_1\in W_1$ and $w_2 \in W_2$. Then $v=T^{-1}(w_1)+ T^{-1}(w_2) \in V_1 + V_2$. So, $V=V_1 \oplus V_2.$
    
    Now, we prove that $V_1$ and $V_2$ are $G$-invariant. Let $v\in V_i,$ then $Tv\in W_i$. Since $W_i$ is $G$-invariant, we have $\phi_g' \circ Tv\in W_i$ and $\psi_g' \circ Tv\in W_i$ implies $T\circ \phi_gv \in W_i$ and $T\circ \psi_gv \in W_i$. Hence, we conclude that each $V_i$ is $G$-invariant.
\end{proof} 
Similarly, we can prove the following Lemma.
\begin{lemma}
    Let $(\phi,\psi)$ be a representation of $G$ on $V$. If $(\phi,\psi)$ is equivalent to an irreducible representation. Then $(\phi,\psi)$ is irreducible.
\end{lemma}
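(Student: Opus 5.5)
We will argue by contraposition, transporting invariant subspaces along the equivalence exactly as in the preceding lemma on decomposability. Let $(\phi',\psi')$ be an irreducible representation of $G$ on $W$ with $(\phi,\psi)\sim(\phi',\psi')$. By the definition of equivalence there is a linear isomorphism $T:V\to W$ with
\begin{align*}
\phi'_g\circ T=T\circ\phi_g \quad\text{and}\quad \psi'_g\circ T=T\circ\psi_g \quad\text{for all } g\in G.
\end{align*}
Suppose $U\leq V$ is a $G$-invariant subspace for $(\phi,\psi)$. We will show that $U$ must be $\{0\}$ or $V$.

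The key step is to check that $T(U)$ is a $G$-invariant subspace of $W$ for $(\phi',\psi')$. It is a subspace because $T$ is linear. Take $w=T(u)$ with $u\in U$ and $g\in G$. Then
\begin{align*}
\phi'_g(w)=\phi'_g(T(u))=T(\phi_g(u))\in T(U),
\end{align*}
since $\phi_g(u)\in U$. Likewise
\begin{align*}
\psi'_g(w)=T(\psi_g(u))\in T(U),
\end{align*}
since $\psi_g(u)\in U$. Hence $T(U)$ is $G$-invariant.

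By irreducibility of $(\phi',\psi')$, we get $T(U)=\{0\}$ or $T(U)=W$. Since $T$ is injective, $T(U)=\{0\}$ gives $U=\{0\}$. Since $T$ is bijective, $T(U)=W$ gives $U=T^{-1}(W)=V$. Therefore the only $G$-invariant subspaces of $V$ are $\{0\}$ and $V$, so $(\phi,\psi)$ is irreducible.

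No step is a real obstacle. The only points needing care are the direction of the intertwining relations and the use of both injectivity and surjectivity of $T$ in the final step. Note that here we push subspaces forward along $T$, whereas the preceding lemma pulled them back along $T^{-1}$. Either direction works, because $T^{-1}$ also intertwines the two pairs: $\phi_g\circ T^{-1}=T^{-1}\circ\phi'_g$ and $\psi_g\circ T^{-1}=T^{-1}\circ\psi'_g$.
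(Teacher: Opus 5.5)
Your proof is correct and is essentially the argument the paper means by ``Similarly'': you transport invariant subspaces along the intertwining isomorphism $T$, pushing forward to $T(U)$ where the decomposability lemma pulled back via $T^{-1}$, and then use that $T$ is bijective. One cosmetic point: you announce a contraposition but actually give a direct argument, which is perfectly fine.
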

\begin{lemma}
    Let $(\phi,\psi)$ be a representation of $G$ on $V$. If $(\phi,\psi)$ is equivalent to a completely reducible representation. Then $(\phi,\psi)$ is completely reducible.
\end{lemma}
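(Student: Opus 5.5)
Suppose $(\phi,\psi)$ on $V$ is equivalent to a completely reducible representation $(\phi',\psi')$ on $W$. The plan is to transport the decomposition of $W$ back to $V$ through the intertwining isomorphism, following the proof of the lemma on decomposable representations. So let $T:V\to W$ be an isomorphism with
$$\phi'_g\circ T=T\circ\phi_g,\qquad \psi'_g\circ T=T\circ\psi_g \qquad \text{for all } g\in G.$$
By complete reducibility, $W=W_1\oplus\cdots\oplus W_n$, where each $W_i$ is $G$-invariant and $(\phi',\psi')|_{W_i}$ is irreducible. Set $V_i=T^{-1}(W_i)$.

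First I check that $V=V_1\oplus\cdots\oplus V_n$. Spanning: for $v\in V$, write $Tv=\sum w_i$ with $w_i\in W_i$; then $v=\sum T^{-1}w_i$ with $T^{-1}w_i\in V_i$. Directness: if $\sum v_i=0$ with $v_i\in V_i$, then $\sum Tv_i=0$ with $Tv_i\in W_i$. Directness of the decomposition of $W$ gives $Tv_i=0$ for each $i$, and injectivity of $T$ gives $v_i=0$.

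Next, $G$-invariance of each $V_i$. For $v\in V_i$ we have $T\phi_g v=\phi'_gTv\in W_i$ and $T\psi_gv=\psi'_gTv\in W_i$. Hence $\phi_gv$ and $\psi_gv$ lie in $T^{-1}(W_i)=V_i$.

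Finally, irreducibility of each restriction. Since $V_i$ and $W_i$ are $G$-invariant, $T$ restricts to an isomorphism $T_i:V_i\to W_i$. The intertwining identities restrict to $V_i$ verbatim, so $T_i$ is an equivalence between $(\phi,\psi)|_{V_i}$ and the irreducible representation $(\phi',\psi')|_{W_i}$. Irreducibility of $(\phi,\psi)|_{V_i}$ then follows from the preceding lemma on equivalence to an irreducible representation. If one prefers not to rely on that lemma, the argument is direct: a $G$-invariant subspace $U\subseteq V_i$ has $G$-invariant image $T(U)\subseteq W_i$, so $T(U)$ is $0$ or $W_i$, and therefore $U$ is $0$ or $V_i$.

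Nothing here is a genuine obstacle. The only step needing care is this last one, namely confirming that the restricted maps still satisfy the equivalence conditions; that is immediate because the $V_i$ are invariant.
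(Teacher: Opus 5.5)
Your proposal is correct and matches the approach the paper intends: the paper gives no separate proof of this lemma and only says it is proved ``similarly'' to the decomposable case, by pulling back the invariant decomposition $W=W_1\oplus\cdots\oplus W_n$ along the intertwining isomorphism $T$ via $V_i=T^{-1}(W_i)$. Your further check that each restriction $T|_{V_i}:V_i\to W_i$ is an equivalence, so irreducibility transfers, supplies the step the paper leaves implicit.
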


\begin{definition}
    Let $G$ be a multiplicative Lie algebra and $V$ be a finite-dimensional vector space over a field $\mathbb{K}$. We say that $G$ acts on $V$ if there exists a pair $ (\theta, \delta)$ where 
$\theta, \delta: G\times V \to V$ are maps satisfying the following conditions:
\begin{enumerate}
\item $\theta(1, v) = v$,
\item  $\theta(g, v_1 +v_2) = \theta(g, v_1 )+ \theta(g, v_2)$,
\item  $\theta(gg', v) = \theta(g, \theta(g', v) )$,
\item $\delta(g,v_1+v_2)=\delta({g}, v_1) + \delta(g,v_2)$,
\item  $\delta({^gg'}, \theta(g, v)) =~  ^g{\delta({g'}, {v})}$,
    \item $\delta(gg',v)=\delta({^gg'}, \theta(g, v)) + \delta(g,v)$,
    \item $\delta(g \star g', \theta(g', v))= \delta(g,\delta(g',v)) - \delta({^gg'},\delta(g,v))$,
\end{enumerate} 
where  $^gg'=gg'g^{-1}$, for all $g,g' \in G$ and $v, v_1, v_2\in V$. Here, we call $ (\theta, \delta)$ is an action  of $G$ on $V$.
\end{definition}

\begin{theorem}
    Let $G$ be a multiplicative Lie algebra and $V$ be a finite-dimensional vector space over a field $\mathbb{K}$. Then  $G$ acts on $V$ if and only if there exists a unique representation of $G$ on $V$. 
\end{theorem}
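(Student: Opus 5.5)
The plan is to set up the standard dictionary between actions and representations and check that the axioms on each side translate into each other. Given an action $(\theta,\delta)$ of $G$ on $V$, I will define, for each $g\in G$, maps $\phi_g,\psi_g:V\to V$ by $\phi_g(v)=\theta(g,v)$ and $\psi_g(v)=\delta(g,v)$. Conversely, given a representation $(\phi,\psi)$, I will set $\theta(g,v)=\phi_g(v)$ and $\delta(g,v)=\psi_g(v)$. These two assignments are visibly inverse to each other. The uniqueness part of the statement is to be read as saying that the representation attached to a given action is uniquely determined by it, namely $\phi_g=\theta(g,-)$ and $\psi_g=\delta(g,-)$. I will interpret condition (5) of the action, $^g\delta(g',v)$, as $\theta(g,\delta(g',v))$, i.e.\ $G$ acting on $V$ through $\theta$.

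For the direction from action to representation, I will proceed in three steps.
\begin{enumerate}
\item Show $\phi_g$ is linear by condition (2), with $\phi_1=I_V$ by (1).
\item Show $\phi_{gg'}=\phi_g\circ\phi_{g'}$ by (3). Then $\phi_g\circ\phi_{g^{-1}}=\phi_1=I_V=\phi_{g^{-1}}\circ\phi_g$, so each $\phi_g\in GL(V)$ and $\phi:G\to GL(V)$ is a group homomorphism.
\item Show $\psi_g$ is additive by (4). Conditions (6), (5), (7) then translate verbatim into identities (1), (2), (3) of Definition \ref{A}.
\end{enumerate}

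For the converse, the same translations read backwards give (1)--(7) from the homomorphism property of $\phi$, linearity of $\phi_g,\psi_g$, and identities (1)--(3) of Definition \ref{A}.

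The main obstacle is a mismatch in the axioms. Definition \ref{A} requires $\psi_g\in End(V)$, i.e.\ $\mathbb{K}$-linear, but the action axioms only give additivity of $\delta(g,-)$ and $\theta(g,-)$, not homogeneity. I would first try to derive scalar compatibility from the axioms. This works automatically when $\mathbb{K}$ is a prime field, since additivity implies $\mathbb{Q}$- or $\mathbb{F}_p$-linearity. In general, however, it does not follow. The fix I would adopt is to read conditions (2) and (4) as $\mathbb{K}$-linearity, i.e.\ to add the homogeneity conditions $\theta(g,\lambda v)=\lambda\theta(g,v)$ and $\delta(g,\lambda v)=\lambda\delta(g,v)$, which is clearly the intended meaning of a linear action. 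With that reading, every remaining step is a direct rewriting of one axiom into the corresponding identity.
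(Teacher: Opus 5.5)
Your proposal is correct and follows the same route as the paper, whose proof is exactly the dictionary $\phi_g=\theta(g,-)$, $\psi_g=\delta(g,-)$ and its inverse, with the axiom-by-axiom matching left implicit. Your extra care is warranted: the paper silently reads ``unique'' as ``uniquely determined by the action'', leaves ${}^g\delta(g',v)$ undefined, and overlooks that additivity alone does not give $\mathbb{K}$-linearity. For instance, complex conjugation on $\mathbb{C}$ as $\theta(t,-)$ for $\mathbb{Z}/2\mathbb{Z}=\{1,t\}$, with $\delta=0$, satisfies all seven action axioms, so your homogeneity fix is genuinely needed.
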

\begin{proof}
  Let $ (\theta, \delta)$ be an action  of $G$ on $V$. Then, the pair $(\phi,\psi)$ is a representation of $G$ on $V$, where $\phi: G \to GL(V)$ and $\psi: G \to End(V)$ are defined as $\phi_g(v)= \theta(g,v)$ and $\psi_g(v)= \delta(g,v)$.

     Conversely, suppose that $(\phi,\psi)$ is a representation of $G$ on $V$, that is, we have a group homomorphism $\phi : G\to GL(V)$, and a map $\psi : G \to End(V)$ satisfying the following conditions:
\begin{enumerate}
\item $\psi_{^g{g'}}(\phi_g(v)) = \phi_g(\psi_{{g'}}(v))$,
    \item $\psi_{gg'}(v)= \psi_{g_{g'}}(\phi_g(v)) + \psi_g(v)$,
    \item $\psi_{g \star g'}(\phi_{g'}(v))=\psi_{g}(\psi_{g'}(v))- \psi_{g_{g'}}(\psi_g(v))$.
\end{enumerate} Then we have an action $(\theta, \delta)$ defined as $\theta(g,v)= \phi_g(v)$ and $\delta(g,v)= \psi_g(v)$.
\end{proof}



\section{Extending structures of a multiplicative Lie algebra}
We start the section by introducing a normal subalgebra and a pseudo action. 
\begin{definition}
    A subgroup $H$ of $G$ is said to be a normal subalgebra of $ G$ if it is a normal subgroup and subalgebra of $G$.
\end{definition}

\begin{definition}
    Let $G$ be a multiplicative Lie algebra and $H$ be a group. A pseudo action of $H$ on $G$ is a pair $(\sigma,\delta)$ where $\sigma: G \to Aut(H)$ is a group homomorphism and $\delta: H \times G \to G$ is a map satisfying the following conditions:
    \begin{enumerate}
        \item $\delta(g,xy)=\delta(g,x) ~{^x{\delta({g}, {y})}},$
        \item $\delta(gh,x)= \delta({h}, {x}) \delta(g,x),$
        \item ${^x\delta(g,y)}= \delta(\sigma_x(g),{^xy}),$
        \item $\delta(g \star h, {x})=\delta({^gh},\delta(g,x)^{-1})\delta(\sigma_x(g),\delta(h,x)),$
        
    \end{enumerate} 
    where ${^gh}= ghg^{-1}, {^xy}= xyx ^{-1}$ for all $g,h\in H$ and $x,y \in G.$
\end{definition}

Let $G$ be a group with multiplicative Lie algebra structure $\star$. Let $H$ be a normal subalgebra of $G$ and $S$ be a subgroup of $G$ such that $G = HS$ and $H\cap S = \{1\}.$ In fact, $G = H \rtimes_{\sigma} S$ as a group, where $\sigma: S \to Aut(H)$ is  group homomorphism defined as $\sigma(x)(h) = xhx^{-1}$.  For any $x,y \in S$ and $h\in H$, we have $x \star y, h \star x \in G$. Since $G = HS$, there exist four maps $f: S \times S \to H,~ \widetilde{\star}: S \times S \to S, ~\theta:H \times S \to H$ and $\delta:H \times S \to S$ such that
\[x \star y=f(x,y) (x\widetilde{\star} y)\] and \[ h \star x=\theta(h,x)\delta(h,x).\] 
Now, we discuss the properties of maps $f, \widetilde{\star}, \theta$, and $\delta$. For each $x,y,z \in S$ and $h,k,l \in H$, the following identities hold.
\begin{enumerate}
     \item Since $x\star x=1$, we have 
    \begin{align}
        f(x,x)=1 ~~\text{and}~~ x\widetilde{\star} x=1. 
    \end{align}
    
\item Since $ x\star yz =(x\star y) {^y(x\star z)}$, we have
\begin{align}
    f(x,yz) (x\widetilde{\star} yz) &= f(x,y) (x\widetilde{\star} y) {^y(f(x,z) (x\widetilde{\star} z))} \notag
    \\&= f(x,y) (x\widetilde{\star} y) \sigma_y(f(x,z)){^y (x\widetilde{\star} z)} \notag
    \\&=f(x,y) \sigma_{(x\widetilde{\star} y)y}(f(x,z))(x\widetilde{\star} y){^y(x\widetilde{\star} z)} \notag
\end{align} 
 which implies 
 \begin{align}
      f(x,yz)= f(x,y) \sigma_{(x\widetilde{\star} y)y}(f(x,z)) ~~\text{and}~~ x\widetilde{\star} yz= (x\widetilde{\star} y)~{^y(x\widetilde{\star} z)}.
 \end{align}

\item Since $xy \star z={^x(y \star z)} (x \star z)$, we have 
\begin{align*}
    f(xy,z) (xy\widetilde{\star} z)&= {^x(f(y,z)(y\widetilde{\star} z))} f(x,z)(x\widetilde{\star} z)\notag
    \\&=\sigma_x(f(y,z)){^x(y\widetilde{\star} z)} f(x,z)(x\widetilde{\star} z) \notag
    \\&=\sigma_x(f(y,z)) \sigma_{^x(y\widetilde{\star} z)}(f(x,z)){^x(y\widetilde{\star} z)}(x\widetilde{\star} z)
\end{align*}
    which implies 
    \begin{align}
        f(xy,z)=\sigma_x(f(y,z)) \sigma_{^x(y\widetilde{\star} z)}(f(x,z)) ~~\text{and}~~ (xy\widetilde{\star} z)={^x(y\widetilde{\star} z)}(x\widetilde{\star} z).
    \end{align}
    
\item For the Jacobi identity, we have
\begin{align}
    (x\star y) \star {^yz}&= (f(x,y)(x\widetilde{\star} y)) \star {^yz} \notag
   \\&= {^{f(x,y)}((x\widetilde{\star} y)\star {^yz})}(f(x,y)\star {^yz}) \notag 
  \\&= {^{f(x,y)}(f(x\widetilde{\star} y,{^yz}) ((x\widetilde{\star} y) \widetilde{\star} {^yz})}) \theta(f(x,y), {^yz}) \delta(f(x,y), {^yz}) \notag 
   \\&={^{f(x,y)}(f(x\widetilde{\star} y,{^yz}))} {^{f(x,y)}((x\widetilde{\star} y) \widetilde{\star} {^yz})}) \theta(f(x,y), {^yz}) \delta(f(x,y), {^yz}) \notag 
   \\&= f(x,y) f(x\widetilde{\star} y,{^yz})  \sigma_{((x\widetilde{\star} y) \widetilde{\star} {^yz})}(f(x,y)^{-1}\theta(f(x,y), {^yz})) ((x\widetilde{\star} y) \widetilde {\star} {^yz})\delta(f(x,y), {^yz}) \notag
\end{align}
similarly,
\begin{align}
    (y\star z) \star {^zx}&= f(y,z) f(y\widetilde{\star} z,{^zx})  \sigma_{((y\widetilde{\star} z) \widetilde{\star} {^zx})}(f(y,z)^{-1}\theta(f(y,z), {^zx})) ((y\widetilde{\star} z) \widetilde {\star} {^zx})\delta(f(y,z), {^zx}) \notag \\
    \text{and}~ (z\star x) \star {^xy}&= f(z,x) f(z\widetilde{\star} x,{^xy})  \sigma_{((z\widetilde{\star} x) \widetilde{\star} {^xy})}(f(z,x)^{-1}\theta(f(z,x), {^xy})) ((z\widetilde{\star} x) \widetilde {\star} {^xy})\delta(f(z,x), {^xy}) \notag
\end{align}
since, $((x\star y) \star {^yz})((y\star z) \star {^zx})((z\star x) \star {^xy})=1$, so we have 
\begin{align}
    &f(x,y) f(x\widetilde{\star} y,{^yz})  \sigma_{((x\widetilde{\star} y) \widetilde{\star} {^yz})}(f(x,y)^{-1}\theta(f(x,y), {^yz}))  \notag\\& \sigma_{((x\widetilde{\star} y) \widetilde {\star} {^yz})\delta(f(x,y), {^yz})}(f(y,z) f(y\widetilde{\star} z,{^zx})  \sigma_{((y\widetilde{\star} z) \widetilde{\star} {^zx})}(f(y,z)^{-1}\theta(f(y,z), {^zx}))) \notag \\& \sigma_{((x\widetilde{\star} y) \widetilde {\star} {^yz})\delta(f(x,y), {^yz})  ((y\widetilde{\star} z) \widetilde {\star} {^zx})\delta(f(y,z), {^zx}) }(f(z,x) f(z\widetilde{\star} x,{^xy})  \sigma_{((z\widetilde{\star} x) \widetilde{\star} {^xy})}(f(z,x)^{-1}\theta(f(z,x), {^xy}))) =1 \notag\\
    ~\text{and}~ &((x\widetilde{\star} y) \widetilde {\star} {^yz})\delta(f(x,y), {^yz})  ((y\widetilde{\star} z) \widetilde {\star} {^zx})\delta(f(y,z), {^zx})((z\widetilde{\star} x) \widetilde {\star} {^xy})\delta(f(z,x), {^xy})=1.
\end{align}
 \item Since ${^z(x\star y)}= {^zx} \star {^zy}$, we have 
 \begin{align*}
     {^z(f(x,y)(x\widetilde{\star} y))}&= f({^zx}, {^zy})({^zx} \widetilde{\star} {^zy}) \\
     {\sigma_z(f(x,y))}{^z(x\widetilde{\star} y)}&= f({^zx}, {^zy})({^zx} \widetilde{\star} {^zy})
 \end{align*} 
which implies 
\begin{align}
    \sigma_z(f(x,y))=f({^zx}, {^zy}) ~~\text{and}~~ {^z(x\widetilde{\star} y)}=({^zx} \widetilde{\star} {^zy}).
\end{align}





\item Since $h \star xy= (h \star x) ^x(h \star y)$, we have 
\begin{align*}
    \theta(h,xy) \delta(h,xy) &= (\theta(h,x)\delta(h,x)) ~{^x(\theta(h,y) \delta(h,y))} \\
    &= \theta(h,x) ~{\sigma_{\delta(h,x)x}(\theta(h,y))} \delta(h,x) {^x\delta(h,y)}
\end{align*}
which implies 
\begin{align}
    \theta(h,xy)=\theta(h,x) \sigma_{\delta(h,x)x}(\theta(h,y)) ~~\text{and}~~ \delta(h,xy)= \delta(h,x) ~{^x\delta(h,y)}.
\end{align}

\item Since $hk \star x = {^h(k \star x)} (h \star x)$, we have 
\begin{align*}
    \theta(hk,x)\delta(hk,x) &= {^h(\theta(k,x)\delta(k,x))} (\theta(h,x)\delta(h,x)) \\
    &= h\theta(k,x) ~{\sigma_{\delta(k,x)}(h^{-1}\theta(h,x))} \delta(k,x) \delta(h,x)
\end{align*}
which implies 
\begin{align}
    \theta(hk,x)=  h\theta(k,x) ~{\sigma_{\delta(k,x)}(h^{-1}\theta(h,x))} ~~\text{and}~~ \delta(hk,x)= \delta(k,x) \delta(h,x).
\end{align}

\item Consider the expressions 
\begin{align*}
    &((h \star x) \star {^xy}) = \theta(h,x) f(\delta(h,x), {^xy}) \sigma_{(\delta(h,x) \widetilde{\star} {^xy})}(\theta(h,x)^{-1}\theta( \theta(h,x), {^xy})) (\delta(h,x) \widetilde{\star} {^xy}) \delta(\theta(h,x), {^xy}),\\
   & ((x \star y) \star {\sigma_y(h)})= f(x,y) \sigma_{(\delta({\sigma_y(h)}, x \widetilde{\star} y)^{-1})}(\theta({\sigma_y(h)},x \widetilde{\star} y )^{-1} f(x,y)^{-1}(f(x,y) \star {\sigma_y(h)})) \delta({\sigma_y(h)}, x \widetilde{\star} y)^{-1},\\
   & ((y \star h) \star {^hx}) = {\sigma_{(\delta(h,y)^{-1})}((\theta(h,y)^{-1} \star [h,x]) [h,x]\theta(\theta(h,y)^{-1},x) \sigma_{\delta(\theta(h,y)^{-1},x)}([h,x]^{-1}))} \\
   & \sigma_{^{\delta(h,y)^{-1}}(\delta(\theta(h,y)^{-1},x)) ~ \delta([h,x], \delta(h,y)^{-1})^{-1}}(\theta([h,x],\delta(h,y)^{-1})^{-1} [h,x] f(\delta(h,y)^{-1},x) \sigma_{(\delta(h,y)^{-1} \widetilde{\star} ~x)}([h,x]^{-1})) \\& \sigma_{\delta(h,y)^{-1}}(\delta(\theta(h,y)^{-1},x)) ~ \delta([h,x], \delta(h,y)^{-1})^{-1} (\delta(h,y)^{-1}\widetilde{\star} ~x),
\end{align*}
Since $((h \star x) \star {^xy})((x \star y) \star {^yh})((y \star h) \star {^hx})=1$, we have
\begin{align}
  & \hspace{-1cm} \theta(h,x) f(\delta(h,x), {^xy}) \sigma_{(\delta(h,x) \widetilde{\star} {^xy})}(\theta(h,x)^{-1}\theta( \theta(h,x), {^xy})) \notag\\ & \hspace{-1cm} \sigma_{(\delta(h,x) \widetilde{\star} {^xy}) \delta(\theta(h,x), {^xy})} (f(x,y) \sigma_{\delta({\sigma_y(h)}, x \widetilde{\star} y)^{-1}}(\theta({\sigma_y(h)},x \widetilde{\star} y )^{-1} f(x,y)^{-1}(f(x,y) \star {\sigma_y(h)}))) \notag\\ & \hspace{-1cm} \sigma_{(\delta(h,x) \widetilde{\star} {^xy}) \delta(\theta(h,x), {^xy}) \delta({\sigma_y(h)}, x \widetilde{\star} y)^{-1}}({\sigma_{\delta(h,y)^{-1}}((\theta(h,y)^{-1} \star [h,x]) [h,x]\theta(\theta(h,y)^{-1},x)  \sigma_{\delta(\theta(h,y)^{-1},x)}([h,x]^{-1}))} \notag\\ & \hspace{-1cm}
   \sigma_{^{\delta(h,y)^{-1}}(\delta(\theta(h,y)^{-1},x)) ~ \delta([h,x], \delta(h,y)^{-1})^{-1}}(\theta([h,x],\delta(h,y)^{-1})^{-1} [h,k] f(\delta(h,y)^{-1},x) \sigma_{(\delta(h,y)^{-1} \widetilde{\star} ~x)}([h,x]^{-1}))) =1 \notag \\ 
~~\text{and}~~
  & {(\delta(h,x) \widetilde{\star} ~{^xy}) \delta(\theta(h,x), {^xy}) \delta({\sigma_y(h)}, x \widetilde{\star} y)^{-1}} \sigma_{\delta(h,y)^{-1}}(\delta(\theta(h,y)^{-1},x)) \notag\\& \delta([h,x], \delta(h,y)^{-1})^{-1} (\delta(h,y)^{-1}\widetilde{\star} ~x)=1.  
\end{align}
Similarly, $((h \star k) \star {^kx})((k \star x) \star {^xh})((x \star h) \star {^hk})=1$, thus we have
\begin{align}
    &(h \star k \star [k,x]) [k,x] \theta(h \star k, x) \sigma_{\delta(h \star k, x)}([k,x]^{-1} \theta(k,x) \sigma_{\delta({\sigma_x(h)},\delta(k,x))^{-1}}(\theta({\sigma_x(h)},\delta(k,x))^{-1}  \theta(k,x)^{-1} \notag\\&(\theta(k,x) \star {\sigma_x(h)}))) ~ {\sigma_{\delta(h \star k, x) \delta({\sigma_x(h)},\delta(k,x))^{-1}}({\sigma_{\delta(h,x)^{-1}}(\theta(h,x)^{-1} \star {^hk})}  ~{\sigma_{\delta({^hk}, \delta(h,x)^{-1})^{-1}}(\theta({^hk}, \delta(h,x)^{-1})^{-1}))}}=1 \notag\\
&~~\text{and}~~
    \delta(h \star k, x) \delta({\sigma_x(h)},\delta(k,x))^{-1} \delta({^hk},\delta(h,x)^{-1})^{-1}=1. 
\end{align}


\item Since $^y(h \star x)= {^yh} \star {^yx}$, we have
\begin{align*}
    ^y(\theta(h,x) \delta(h,x))= \theta({\sigma_y(h)},{^yx}) \delta({\sigma_y(h)},{^yx}) \\
    {\sigma_y(\theta(h,x))} {^y\delta(h,x)}= \theta({\sigma_y(h)},{^yx}) \delta({\sigma_y(h)},{^yx})
\end{align*}
which implies 
\begin{align}
    {\sigma_y(\theta(h,x))}=\theta({\sigma_y(h)},{^yx}) ~~\text{and}~~ {^y\delta(h,x)}= \delta({\sigma_y(h)},{^yx}).
\end{align}

Similarly, $^h(k \star x)= {^hk} \star {^hx}$, we have
\begin{align*}
    {^h\theta(k,x)} {^h\delta(k,x)}&= ({^hk} \star [h,x] x)\\
    {^h\theta(k,x)} [h,\delta(k,x)] \delta(k,x)&= ({^hk} \star [h,x]) ^{[h,x]}({^hk \star x}) \\
    &= ({^hk} \star [h,x]) ^{[h,x]}\theta({^hk},x) ^{[h,x]}\delta({^hk},x) \\ 
    &= ({^hk} \star [h,x]) ^{[h,x]}\theta({^hk},x) [[h,x],\delta({^hk},x)] \delta({^hk},x)
\end{align*}
which implies 
\begin{align}
    {^h\theta(k,x)} [h,\delta(k,x)]= ({^hk} \star [h,x]) ^{[h,x]}\theta({^hk},x) [[h,x],\delta({^hk},x)] ~~\text{and}~~ \delta(k,x)=\delta({^hk},x).
\end{align}
\begin{remark}
\begin{itemize}
    \item We can see that \begin{align*}
        (hx) \star' (ky)= (h \sigma_{\delta(k,x)^{-1}}(\theta(k,x)^{-1}h^{-1}(h \star k)kh f(x,y) \notag\\\sigma_{x\widetilde{\star}y}(h^{-1}\theta(h,y)\sigma_{\delta(h,y)}(k^{-1})))) (\delta(k,x)^{-1}(x \widetilde{\star} y)\delta(h,y)). 
    \end{align*}
\item $S$ is a group with an extra binary operation $\widetilde{\star}$ satisfying the following identity:
\begin{enumerate}
\item $ x\widetilde{\star} x= 1$,
    \item $ x\widetilde{\star} yz= (x\widetilde{\star} y)~{^y(x\widetilde{\star} z)},$
            \item $(xy\widetilde{\star} z)={^x(y\widetilde{\star} z)}(x\widetilde{\star} z),$
                         \item $((x\widetilde{\star} y) \widetilde {\star} {^yz})\delta(f(x,y), {^yz})  ((y\widetilde{\star} z) \widetilde {\star} {^zx})\delta(f(y,z), {^zx})((z\widetilde{\star} x) \widetilde {\star} {^xy})\delta(f(z,x), {^xy})=1,$

\item ${^z(x\widetilde{\star} y)}=({^zx} \widetilde{\star} {^zy}),$
\end{enumerate} for all $x, y, z \in S.$
\end{itemize}
\end{remark}



\end{enumerate}

\begin{definition}
    Let $H$ be a multiplicative Lie algebra with multiplicative Lie algebra structure $\star$ and $S$ a group. An extending multiplicative Lie algebra of $H$ through $S$ is a system $\Omega(H,S)=(\sigma,\delta,\theta,f,\widetilde{\star})$ consisting of a group homomorphism $\sigma: S \to Aut(H)$ and four other maps $\delta: H \times S \to S, \theta:H \times S \to H, f: S \times S \to H, \widetilde{\star}: S \times S\to S $. Let $\Omega(H,S)=(\sigma,\delta,\theta,f,\widetilde{\star})$ be an extending multiplicative Lie algebra. We denote $H \rtimes_{\sigma}^{ML}S$ the group $H \rtimes_{\sigma}S$ with the binary operation $\star'$ on $H \rtimes_{\sigma}S$ defined by
    \begin{align}\label{4.12}
        (h,x) \star' (k,y)= (h \sigma_{\delta(k,x)^{-1}}(\theta(k,x)^{-1}h^{-1}(h \star k)kh f(x,y) \notag\\\sigma_{x\widetilde{\star}y}(h^{-1}\theta(h,y)\sigma_{\delta(h,y)}(k^{-1}))), \delta(k,x)^{-1}(x \widetilde{\star} y)\delta(h,y)). 
    \end{align}
 The object $H \rtimes_{\sigma}^{ML}S$ is called the extended multiplicative Lie algebra if it is a multiplicative Lie algebra with the multiplicative Lie algebra structure $\star'$ given by $(\ref{4.12})$.
\end{definition}

Thus, from the above discussion, we have the following theorem:

\begin{theorem}
    Let $H$ be a multiplicative Lie algebra, $S$ a group, and $\Omega(H, S)$ an extending multiplicative Lie algebra of $H$ through $S$. The following statements are equivalent:
    \begin{enumerate}
        \item $H \rtimes_{\sigma}^{ML}S$ is a extended multiplicative Lie algebra.
        \item  The following conditions hold for any $h,k,l \in H$ and $x,y,z\in S$:
        \begin{enumerate}
        \item The pair $(\sigma, \delta)$ is a pseudo action of $H$ on $S$,
            \item $f(x,x)=1$ and $x \widetilde{\star} x=1,$
            \item $ f(x,yz)= f(x,y) {\sigma_{(x\widetilde{\star} y)y}(f(x,z))} ~~\text{and}~~ x\widetilde{\star} yz= (x\widetilde{\star} y)~{^y(x\widetilde{\star} z)},$
            \item $f(xy,z)={\sigma_x(f(y,z))} {\sigma_{^x(y\widetilde{\star} z)}(f(x,z))} ~~\text{and}~~ (xy\widetilde{\star} z)={^x(y\widetilde{\star} z)}(x\widetilde{\star} z),$
             \item $f(x,y) f(x\widetilde{\star} y,{^yz})  \sigma_{((x\widetilde{\star} y) \widetilde{\star} {^yz})}(f(x,y)^{-1}\theta(f(x,y), {^yz})) \sigma_{((x\widetilde{\star} y) \widetilde {\star} {^yz})\delta(f(x,y), {^yz})}(f(y,z) \\ f(y\widetilde{\star} z,{^zx})  \sigma_{((y\widetilde{\star} z) \widetilde{\star} {^zx})}(f(y,z)^{-1}\theta(f(y,z), {^zx})))   \sigma_{((x\widetilde{\star} y) \widetilde {\star} {^yz})\delta(f(x,y), {^yz})  ((y\widetilde{\star} z) \widetilde {\star} {^zx})\delta(f(y,z), {^zx})}(f(z,x) \\ f(z\widetilde{\star} x,{^xy})  \sigma_{((z\widetilde{\star} x) \widetilde{\star} {^xy})}(f(z,x)^{-1}\theta(f(z,x), {^xy}))) =1,$ 

            \item $((x\widetilde{\star} y) \widetilde {\star} {^yz})\delta(f(x,y), {^yz})  ((y\widetilde{\star} z) \widetilde {\star} {^zx})\delta(f(y,z), {^zx})((z\widetilde{\star} x) \widetilde {\star} {^xy})\delta(f(z,x), {^xy})=1,$

\item $\sigma_z(f(x,y))=f({^zx}, {^zy}) ~~\text{and}~~ {^z(x\widetilde{\star} y)}=({^zx} \widetilde{\star} {^zy}),$

\item $\theta(h,xy)=\theta(h,x) \sigma_{\delta(h,x)x}(\theta(h,y)),$
            


\item $ \theta(hk,x)=  h\theta(k,x) ~{\sigma_{\delta(k,x)}(h^{-1}\theta(h,x))}$,
   \item $\theta(h,x)\,
f\bigl(\delta(h,x),{}^{x}y\bigr)\,
\sigma_{\delta(h,x)\widetilde{\star}{}^{x}y}
\Bigl(
\theta(h,x)^{-1}
\theta\bigl(\theta(h,x),{}^{x}y\bigr)
\Bigr)
\\
\quad{}
\sigma_{(\delta(h,x)\widetilde{\star}{}^{x}y)
\delta(\theta(h,x),{}^{x}y)}
\Bigl(
f(x,y)\,
\sigma_{\delta({\sigma_y(h)},\,x\widetilde{\star}y)^{-1}}
\Bigl(
\theta({\sigma_y(h)},x\widetilde{\star}y)^{-1}
f(x,y)^{-1}
\bigl(f(x,y)\star{\sigma_y(h)}\bigr)
\Bigr)
\Bigr)
\\
\quad{}
\sigma_{(\delta(h,x)\widetilde{\star}{}^{x}y)
\delta(\theta(h,x),{}^{x}y)
\delta({\sigma_y(h)},x\widetilde{\star}y)^{-1}}
\Biggl(
\sigma_{\delta(h,y)^{-1}}
\Bigl(
\bigl(\theta(h,y)^{-1}\star[h,x]\bigr)
[h,x]\,
\theta\bigl(\theta(h,y)^{-1},x\bigr)
\\
\hspace{5cm}
\sigma_{\delta(\theta(h,y)^{-1},x)}
\bigl([h,x]^{-1}\bigr)
\Bigr)
\sigma_{
{}^{\delta(h,y)^{-1}}
(\delta(\theta(h,y)^{-1},x))
\,
\delta([h,x],\delta(h,y)^{-1})^{-1}
} \Bigl(
\theta([h,x],\delta(h,y)^{-1})^{-1}
\\
\hspace{2.5cm}
[h,k]
f(\delta(h,y)^{-1},x)\,
\sigma_{\delta(h,y)^{-1}\widetilde{\star}x}
\bigl([h,x]^{-1}\bigr)
\Bigr)
\Biggr)
=1,$
\item ${(\delta(h,x) \widetilde{\star} ~{^xy}) \delta(\theta(h,x), {^xy}) \delta({\sigma_y(h)}, x \widetilde{\star} y)^{-1}} \sigma_{\delta(h,y)^{-1}}(\delta(\theta(h,y)^{-1},x)) \\ \delta([h,x], \delta(h,y)^{-1})^{-1} (\delta(h,y)^{-1}\widetilde{\star} ~x)=1$,
\item
$
(h \star k \star [k,x]) [k,x]\,
\theta(h \star k,x) \sigma_{\delta(h \star k,x)}
\Big(
[k,x]^{-1}
\theta(k,x) \sigma_{\delta({\sigma_x (h)},\delta(k,x))^{-1}} \big(
\theta({\sigma_x (h)},\delta(k,x))^{-1}
\\
\qquad
\theta(k,x)^{-1}
\big(
\theta(k,x)\star {\sigma_x (h)}
\big)
\big)
\Big) 
{}\sigma_{\delta(h \star k,x)\,
\delta({\sigma_x (h)},\delta(k,x))^{-1}}
\Big(
{} \sigma_{\delta(h,x)^{-1}}
\big(
\theta(h,x)^{-1}\star {^h k}
\big)
\\
\qquad
{}\sigma_{\delta({^h k},\delta(h,x)^{-1})^{-1}}
\theta({^h k},\delta(h,x)^{-1})^{-1}
\Big)
=1
$,
\item ${^h\theta(k,x)} [h,\delta(k,x)]= ({^hk} \star [h,x]) ^{[h,x]}\theta({^hk},x) [[h,x],\delta({^hk},x)]$ and $\delta(k,x)=\delta({^hk},x),$ 
\item ${\sigma_y(\theta(h,x))}=\theta({\sigma_y(h)},{^yx}). $ 

\end{enumerate}
where for $h\in H$ and $x\in S$, $[h,x]$ denotes the element $h\sigma_x(h^{-1})$
\end{enumerate}
\end{theorem}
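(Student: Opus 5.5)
The plan is to prove the equivalence by reading the computations of the preceding discussion backwards. Put $G = H\rtimes_{\sigma}S$. Every element of $G$ is written uniquely as $hx$ with $h\in H$, $x\in S$, so that $H$ is a normal subgroup and $S$ a complement. For statement (1) $\Rightarrow$ (2), suppose $\star'$ from (\ref{4.12}) makes $G$ a multiplicative Lie algebra in which $H$ is a normal subalgebra. Then for $x,y\in S$ and $h\in H$ we have
\[
x\star' y = f(x,y)\,(x\widetilde{\star} y), \qquad h\star' x=\theta(h,x)\,\delta(h,x).
\]
These follow by specialising (\ref{4.12}) to $(1,x)\star'(1,y)$ and $(h,1)\star'(1,x)$, using $\theta(1,x)=1$, $\delta(1,x)=1$ and the trivial values of $\theta,\delta$ in the second slot.

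We then apply each axiom of Ellis' definition to the triples just used in items (1)–(9) of the discussion:
\begin{itemize}
\item $x\star x=1$ on $S$ gives (b);
\item left and right bilinearity on $S\times S$ give (c) and (d);
\item bilinearity on $H\times S$ gives (h) and (i), together with the $\delta$-parts that make up the pseudo action axioms in (a);
\item compatibility with conjugation gives (g), (n), and the last part of (m);
\item the three Jacobi identities (for $x,y,z$; for $h,x,y$; for $h,k,x$) give (e), (f), (j), (k), (l).
\end{itemize}
In each case we compare $H$-components and $S$-components via uniqueness of the decomposition $G=HS$. The key device throughout is the commutation rule $s\,h = \sigma_s(h)\,s$, which moves all $H$-factors to the left.

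For (2) $\Rightarrow$ (1), the point is to show that the listed identities already imply all five axioms on arbitrary elements $(h,x),(k,y),(l,z)$. We do this by reduction to generators. First we show that, under (a)–(n), the formula (\ref{4.12}) is exactly what bilinearity forces: expanding $hx\star' ky$ with axioms (2) and (3) reduces it to $h\star k$, $h\star y$, $x\star k$ and $x\star y$. The mixed term $x\star k$ is expressed through $k\star x$ via $x\star k = (k\star x)^{-1}$ conjugated suitably. That identity follows from (1) and (2) as in any multiplicative Lie algebra.

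Once (\ref{4.12}) is identified as the bilinear extension, we proceed axiom by axiom:
\begin{itemize}
\item Bilinearity on all of $G$ follows from bilinearity on the pieces, namely (c), (d), (h), (i), (a) and the axioms of $H$.
\item Axiom (1) follows from (b), the identity in $H$, and the cross terms cancelling by the inversion relation.
\item Axiom (5) follows from (g), (m), (n) and the pseudo action condition (a)(3).
\item For the Jacobi identity, we use the standard fact that in the presence of bilinearity and conjugation invariance it suffices to verify it on generating triples of types $(x,y,z)$, $(h,x,y)$ and $(h,k,x)$ (up to cyclic order), together with $(h,k,l)$, which holds in $H$. These cases are precisely (e)/(f), (j)/(k) and (l).
\end{itemize}

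The main obstacle will be this last reduction of the Jacobi identity to generator triples. Ellis' identity (4) is not trilinear in the naive sense: it involves the twisted arguments ${}^yz$, and expanding it requires the Hall–Witt type manipulations valid in any multiplicative Lie algebra. I would first try to prove a lemma: if $\star'$ satisfies axioms (1), (2), (3) and (5) on a group $G=HS$, and axiom (4) holds whenever each argument lies in $H\cup S$, then (4) holds on all of $G$. The approach is induction on word length, using the equivalent form $J(x,y,z)=1$ and showing $J(ab,y,z)$ factors as conjugates of $J(a,\cdot,\cdot)$ and $J(b,\cdot,\cdot)$ via (2), (3) and (5). Should this factorisation fail in general, the fallback is a direct but lengthy verification of (4) on general elements, expanding with (\ref{4.12}) and collecting $H$- and $S$-parts as in the derivations of (e), (f), (j)–(l).
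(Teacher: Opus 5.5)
Your direction (1)$\Rightarrow$(2) is the paper's own argument. The paper's proof consists only of the preceding discussion, which writes $x\star y=f(x,y)(x\widetilde{\star}y)$ and $h\star x=\theta(h,x)\delta(h,x)$ and compares $H$- and $S$-components in each axiom. Be aware, though, that the normalisations you invoke do not follow from (1) for an arbitrary system $\Omega(H,S)$. These are $\theta(1,x)=\delta(1,x)=1$, and also $f(1,x)=1=1\widetilde{\star}x$, which you need for $(h,1)\star'(1,x)$; the same applies to your assumption that $H$ is a subalgebra. For example, take $H=1$, $S=\langle s\rangle\cong\mathbb{Z}/2$, and all data trivial except $\delta(1,x)=s$. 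Then (\ref{4.12}) gives the trivial bracket, so (1) holds, but (a) fails because $\delta(1,x)=\delta(1\cdot 1,x)=\delta(1,x)^2$. So this direction is valid only for the canonical data read off from a multiplicative Lie algebra containing $H$ as a normal subalgebra. That is the setting the discussion, and your argument, actually treat.

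The genuine gap is (2)$\Rightarrow$(1), for which the paper gives no argument and which your sketch does not close. The problem is not only that you leave the Jacobi reduction lemma open; some steps you treat as routine are false, because the list (a)--(n) omits necessary constraints. Axiom (5) is imposed only for conjugating $x\star y$ by elements of $S$ (item (g)) and $h\star x$ by elements of $S$ and $H$ (items (n), (m)). Nothing controls ${}^{h}(x\star' y)$ for $h\in H$, or $\sigma_z(h\star k)$ against $\sigma_z(h)\star\sigma_z(k)$. Likewise, (\ref{4.12}) is the expansion $(hx\star k)\,{}^{k}(hx\star y)$. Bilinearity in both variables forces it to agree with ${}^{h}(x\star ky)\,(h\star ky)$, i.e.\ ${}^{hk}(x\star y)\,(h\star k)=(h\star k)\,{}^{kh}(x\star y)$, and this is not among (a)--(n) either.

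Concretely, let $H=S_3$ with the trivial bracket and $S=V_4=\{1,a,b,ab\}$. Let $\sigma,\theta,\delta,\widetilde{\star}$ be trivial, and put $f(x,y)=(12)$ if $x\neq y$ are both non-trivial and $f(x,y)=1$ otherwise. Every condition (a)--(n) holds. Here I read the misprint $[h,k]$ in (j) as $[h,x]$, as in item (8) of the discussion; taken literally, (j) would force $H$ to be abelian. Yet (\ref{4.12}) becomes $(h,x)\star'(k,y)=({}^{kh}f(x,y),1)$. Hence ${}^{((123),1)}\bigl((1,a)\star'(1,b)\bigr)=((23),1)$, while ${}^{((123),1)}(1,a)\star'{}^{((123),1)}(1,b)=((12),1)$. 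So axiom (5) fails, and bilinearity fails as well. Thus (2) does not imply (1) as stated, and no inductive reduction of the Jacobi identity can repair this. The list must first be enlarged by conditions of the above kind, such as ${}^{h}(x\star y)={}^{h}x\star{}^{h}y$ and each $\sigma_z$ preserving $\star$. Only then does a generator-by-generator verification like yours have a chance.
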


The above theorem is very technical and hard to apply in general. So, we restrict ourselves to the cases when $\sigma$ is trivial and $H$, $S$ are abelian. We have the following remarks.

\begin{remark}\label{4.2} Suppose $\sigma=I_H$.  Then  $H \rtimes_{\sigma}^{ML}S$ is a extended multiplicative Lie algebra 
    if  the following conditions holds  for all $h,k,l\in H$ and $x,y,z\in S$:
        \begin{enumerate}
        \item The pair $(I_H, \delta)$ is a pseudo action of $H$ on $S$,
            \item $f(x,x)=1$ and $x \widetilde{\star} x=1,$
            \item $ f(x,yz)= f(x,y) f(x,z) ~~\text{and}~~ x\widetilde{\star} yz= (x\widetilde{\star} y)~{^y(x\widetilde{\star} z)},$
            \item $f(xy,z)=f(y,z) f(x,z) ~~\text{and}~~ (xy\widetilde{\star} z)={^x(y\widetilde{\star} z)}(x\widetilde{\star} z),$
             \item $f(x,y) f(x\widetilde{\star} y,{^yz})  f(x,y)^{-1}\theta(f(x,y), {^yz}) f(y,z) \\ f(y\widetilde{\star} z,{^zx})  f(y,z)^{-1}\theta(f(y,z), {^zx})   f(z,x) \\ f(z\widetilde{\star} x,{^xy})  f(z,x)^{-1}\theta(f(z,x), {^xy}) =1,$ 

            \item $((x\widetilde{\star} y) \widetilde {\star} {^yz})\delta(f(x,y), {^yz})  ((y\widetilde{\star} z) \widetilde {\star} {^zx})\delta(f(y,z), {^zx})((z\widetilde{\star} x) \widetilde {\star} {^xy})\delta(f(z,x), {^xy})=1,$

\item $f(x,y)=f({^zx}, {^zy}) ~~\text{and}~~ {^z(x\widetilde{\star} y)}=({^zx} \widetilde{\star} {^zy}),$

\item $\theta(h,xy)=\theta(h,x) \theta(h,y),$
\item $ \theta(hk,x)=  {^h\theta(k,x)} \theta(h,x)$,
   \item $\theta(h,x)\,
f\bigl(\delta(h,x),{}^{x}y\bigr)\,
\theta(h,x)^{-1}
\theta\bigl(\theta(h,x),{}^{x}y\bigr)
f(x,y)\,
\theta(h,x\widetilde{\star}y)^{-1}
f(x,y)^{-1}
\bigl(f(x,y)\star h \bigr)
\\
\quad{}
\bigl(\theta(h,y)^{-1}\star[h,x]\bigr)
[h,x]\,
\theta\bigl(\theta(h,y)^{-1},x\bigr)
[h,x]^{-1}
\theta([h,x],\delta(h,y)^{-1})^{-1}
\\
\hspace{2.5cm}
[h,k]
f(\delta(h,y)^{-1},x)\,
[h,x]^{-1}
=1,$
\item ${(\delta(h,x) \widetilde{\star} ~{^xy}) \delta(\theta(h,x), {^xy}) \delta(h, x \widetilde{\star} y)^{-1}} \delta(\theta(h,y)^{-1},x) \\ \delta([h,x], \delta(h,y)^{-1})^{-1} (\delta(h,y)^{-1}\widetilde{\star} ~x)=1$,
\item
$
(h \star k \star [k,x]) [k,x]\,
\theta(h \star k,x) 
[k,x]^{-1}
\theta(k,x)
\theta(h,\delta(k,x))^{-1}
\\
\theta(k,x)^{-1}
\big(
\theta(k,x)\star h
\big)
\big(
\theta(h,x)^{-1}\star {^h k}
\big)
\theta({^h k},\delta(h,x)^{-1})^{-1}
=1,
$
\item ${^h\theta(k,x)} =  \theta({^hk},x)$ and $\delta(k,x)=\delta({^hk},x),$ 
\item $\theta(h,x)=\theta(h,{^yx}).$ 
\end{enumerate}

\end{remark}


\begin{remark} Suppose   $H \rtimes_{\sigma}^{ML}S$ is a extended multiplicative Lie algebra.
\begin{enumerate}
    \item Since $(h, 1) \star' (k, 1) = (h \star k, 1)$. Hence, we can say that $H$ is a subalgebra of $H \rtimes_{\sigma}^{ML}S$.     

\item If $\delta(h, x) =1$ for all $h\in H$ and $x \in S$. Then $$(h, 1) \star' (k, y) = (h \theta(k,x)^{-1}h^{-1}(h \star k)kh f(x,y) \notag h^{-1}\theta(h,y)k^{-1}, 1).$$ This shows that $H$ is an ideal and $\widetilde{\star}$ is multiplicative Lie algebra  structure on $S$ with $G/H \cong S$ as a multiplicative Lie algebra.
\item  If $H$ and $S$ are Lie algebras, then Remark $\ref{4.2}$ is the same as Theorem $2.2$ of \cite{AG}.
\end{enumerate}
\end{remark}

\noindent{\bf Acknowledgment:} We would like to express our sincere thanks to Prof. Guram Donadze, Institute of Cybernetics of Georgian Technical University, for his valuable suggestions. We are also thankful to the National Board for Higher Mathematics (NBHM) for providing the project ``Linear Representation of Multiplicative Lie Algebra" (02011/19/2023/NBHM (R.P)/R~\& D-II/5954). Also, the first-named author thanks IIIT Allahabad and the Ministry of Education, Government of India, for providing the institute fellowship.

\noindent{\bf  Data Availability declaration:} Not applicable.

\noindent{\bf Competing Interests:} The authors have no competing interests to declare that are relevant to the content of this article.

\noindent{\bf Funding Declaration:} The first-named author is supported by the National Board for Higher Mathematics (NBHM), DAE, Govt. of India, project ``Linear Representation of Multiplicative Lie Algebra" (02011/19/2023/NBHM (R.P)/R~\& D-II/5954).


\begin{thebibliography}{9}
     \bibitem{AG1} A. L. Agore and G. Militaru; Extending structures I: the level of groups, Algebr. Represent. Theory 17 (2014), 831-848.
     \bibitem{AG} A. L. Agore and G. Militaru; Extending structures for Lie algebras, Monatsh. Math. 174 (2014), 169-193.
    \bibitem{AGNM} A. Bak, G. Donadze, N. Inassaridze, M. Ladra; Homology of multiplicative Lie rings, J. Pure Appl. Algebra 208 (2007) 761-777.

    
    \bibitem{GNM} G. Donadze, N. Inassaridze, M. Ladra; Non-abelian tensor and exterior products of multiplicative Lie rings, Forum Math. 29 (2017) 563-574.
    \bibitem{GNMA} G. Donadze, N. Inassaridze, M. Ladra, A. M. Vieites; Exact sequences in homology of multiplicative Lie rings and a new version of Stallings Theorem, J. Pure Appl. Algebra 222 (2018) 1786-1802.

\bibitem{GM}
G. Donadze and M. Ladra;
More on five commutator identities, {J. Homotopy Relat. Struct.},  2 (1) ( 2007), 45-55.
    
    \bibitem{GJ}
	G. J. Ellis; On five well-known commutator identities, J. Aust. Math. Soc. (Series A), 54 (1993), 1-19.

    \bibitem{EH}
J. E. Humphreys; Introduction to Lie algebras and representation theory, Springer-Verlag, 1980.

\bibitem{KJ} A. Kumar, R. Joshi, M. S. Pandey, S. K. Upadhyay; The Bogomolov multiplier of a multiplicative Lie algebra, Ricerche Mat., 75 (2) (2026), 1083-1097.

    \bibitem{KU} A. Kumar, S. Kushwaha, and S. K. Upadhyay; The multiplicative Lie algebra on general linear groups, Georgian Math. J., 32 (3) (2025), 457-463. 

    \bibitem{KP} A. Kumar, D. Pal,  S. Kushwaha, and S. K. Upadhyay; {The Schur multiplier of some finite multiplicative Lie algebras}, J. Lie theory (2024), 34 (2), 423-436.

\bibitem{RS} R. Lal and S. K. Upadhyay; Multiplicative Lie algebra and Schur multiplier, J. Pure Appl. Algebra 223 (9) (2019), 3695-3721.

\bibitem{MKU} N. K. Maurya, A. Kumar, and S. K. Upadhyay; {Excision and idealization of a multiplicative Lie algebra}, Ricerche Mat. (2026), https://doi.org/10.1007/s11587-026-01087-8.

\bibitem{PK} D. Pal, A. Kumar, and S. K. Upadhyay;  {Multiplicative Lie algebra structure on a nilpotent group of class $2$}, Georgian Math. J. (2026), 33 (2), 301-306.

\bibitem{DSS}  D. Pal, A. Kumar, S. K. Upadhyay, and S. Kushwaha;
Multiplicative Lie algebra structures on the semi-direct product of groups, J. Lie Theory 35 (2025), no. 3, 573-582.

\bibitem{MRS}
M. S. Pandey, R. Lal, and S. K. Upadhyay;
Lie commutator, solvability and nilpotency in multiplicative Lie algebras, J. Algebra Appl., 20 (8) (2021), 2150138 (11 pp).
    
	\bibitem{MS}	M. S. Pandey and S. K. Upadhyay; Theory of extension of multiplicative Lie algebras, Journal of Lie Theory 31 (2021) 637-658.
    \bibitem{MS1}
M. S. Pandey and S. K. Upadhyay; Classification of multiplicative Lie algebra structures on a finite group, Colloq. Math., 168 (2022), 25-34.

\bibitem{FP}
F. Point and P. Wantiez;
Nilpotency criteria for multiplicative Lie algebra, J. Pure. Appl. Algebra, 111 (1996), 229-243.


\bibitem{BS}
B. Steinberg; Representation theory of finite groups, Springer Newyork, 2012.

\bibitem{GW}
G. L. Walls;
Multiplicative Lie algebras, {Turkish J. Math.}, 43 (2019), 2888–2897.



\end{thebibliography}
\end{document}